\theoremstyle{plain}
\newtheorem{theorem}{Theorem}[section]
\newtheorem{corollary}[theorem]{Corollary}
\newtheorem{lemma}[theorem]{Lemma}
\theoremstyle{definition}
\newtheorem{remark}[theorem]{Remark}
\newtheorem{example}[theorem]{Example}
\newtheorem{definition}[theorem]{Definition}
\numberwithin{equation}{section}
\renewcommand\labelenumi{\textup{\alph{enumi})}}
\renewcommand\theenumi\labelenumi
\newcommand\nat{\mathds{N}}
\newcommand\integer{\mathds{Z}}
\newcommand\real{{\mathds{R}}}
\newcommand\rn{{\mathds{R}^n}}
\newcommand\comp{{\mathds{C}}}
\newcommand\I{\mathds{1}}
\newcommand\Pp{\mathds{P}}
\newcommand\Ee{\mathds{E}}
\newcommand{\nnorm}[1]{\|#1\|}
\newcommand{\Fcal}{\mathcal{F}}
\newcommand{\Xcal}{\mathcal{X}}
\newcommand\Bcal{\mathscr{B}}
\begin{document}
\title[Subordination and Function Spaces]{\bfseries Bochner's Subordination and Fractional Caloric Smoothing in Besov and Triebel--Lizorkin Spaces}

\date{To appear in Mathematische Nachrichten 2022}

\author[V.~Knopova]{Victoriya Knopova}
\address[V.~Knopova]{TU Dresden\\ Fakult\"{a}t Mathematik\\ Institut f\"{u}r Mathematische Stochastik\\ 01062 Dresden, Germany}
\email{victoria.knopova@tu-dresden.de}

\author[R.L.~Schilling]{Ren\'e L.\ Schilling}
\address[R.L.~Schilling]{TU Dresden\\ Fakult\"{a}t Mathematik\\ Institut f\"{u}r Mathematische Stochastik\\ 01062 Dresden, Germany}
\email{rene.schilling@tu-dresden.de}

\begin{abstract}
    We use Bochner's subordination technique to obtain caloric smoothing estimates in Besov- and Triebel--Lizorkin spaces. Our new estimates extend known smoothing results for the Gau{\ss}--Weierstra{\ss}, Cauchy--Poisson and higher-order generalized Gau{\ss}--Weierstra{\ss} semigroups. Extensions to other function spaces (homogeneous, hybrid) and more general semigroups are sketched.
\end{abstract}
\subjclass[2010]{\emph{Primary:} 46E36; 60J35. \emph{Secondary:} 35K25; 35K55; 60G51.}
\keywords{Function spaces, caloric smoothing, Gau{\ss}--Weierstra{\ss} semigroup, subordination.}

\maketitle

\section{Introduction}

Let $(W_t^f)_{t\geq 0}$ be the $f$-subordinated Gau{\ss}--Weierstra{\ss} semigroup; by this we mean the family of operators which is defined through the Fourier transform
\begin{gather}\label{e-intro}
    \Fcal(W^f_t u)(\xi)
    = e^{-tf(|\xi|^2)}\,\Fcal u(\xi),\quad u\in S(\rn),
\end{gather}
where the function $f:(0,\infty)\to(0,\infty)$ is a so-called Bernstein function, see Section~\ref{sub}. Typical examples are $f(x)=x$ (which gives the classical Gau{\ss}--Weierstra{\ss} semigroup), $f(x)=\sqrt x$ (which gives the Cauchy--Poisson semigroup) or $f(x) = x^\alpha$, $0<\alpha<1$ (which leads to the stable semigroups). In this note we prove the caloric smoothing of (the extension of) $(W_t^f)_{t\geq 0}$ in Besov and Triebel--Lizorkin spaces, see Section~\ref{fun}. ``Caloric smoothing'' refers to the smoothing effect of the semigroup which can be quantified through inequalities of the following form
\begin{gather}\label{e-intro-2}
    C_{f,d}(t) \nnorm{W_t^{f} u \mid A_{p,q}^{s+d}}
    \leq
    \nnorm{u \mid A_{p,q}^s}
    \quad\text{for all $0<t\leq 1$ and $u\in A_{p,q}^s$},
\end{gather}
where $d\geq 0$ is arbitrary, $C_{f,d}(t)$ is a constant depending only on $f$ and $d$, and $C_{f,d}(t)\to 0$ as $t\to 0$; the symbol $A_{p,q}^s = A_{p,q}^s(\rn)$ stands for a Besov space $B_{p,q}^s(\rn)$ or a Triebel--Lizorkin space $F_{p,q}^s(\rn)$.

Results of this type are known for the Gau{\ss}--Weierstra{\ss} semigroup $W_t$, i.e.\ for $f(x)=x$  (see  Triebel \cite[Theorem~3.35]{T20}) and for the generalized Gau{\ss}--Weierstra{\ss} semigroup $W_t^{(m)}$ where $m\in\nat$; these operators are also given through the relation \eqref{e-intro} if we take $f(x) = x^m$, cf.\ \cite[Remark~3.37]{T20}, but note that for $m>1$ this is not a Bernstein function, and recent results by Baaske \& Schmei{\ss}er \cite[Theorem~3.5]{bas-sch17}.

We will use Bochner's subordination technique to prove \eqref{e-intro-2} for arbitrary Bernstein functions $f(x)$ and arbitrary powers $f(x) = x^\beta$, $\beta>0$. The constant $C_{f,d}(t)$ is comparable with $[f^{-1}(1/t)]^{-d/2}$. As an application we generalize the result of Baaske \& Schmei{\ss}er  \cite[Theorem~3.5]{bas-sch17} on the  existence and uniqueness of the mild and strong solutions of a nonlinear Cauchy problem with arbitrary (fractional) powers of the Laplacian $(-\Delta)^\beta$, $\beta\geq 1$.

\section{Function spaces}\label{fun}

Let us briefly recall some notation. $L_p(\rn)$, resp., $\ell_q(\nat_0)$ denote the spaces of $p$th order integrable functions on $\rn$, resp., $q$th order summable sequences indexed by $\nat_0$; we admit $0<p,q\leq\infty$. Since we are always working in $\rn$, we will usually write $L_p$ instead of $L_p(\rn)$. If $p,q< 1$ these spaces are quasi-Banach spaces, their (quasi-)norms are denoted by $\|\cdot\mid L_p\|$ and $\|\cdot\mid\ell_q\|$, respectively. We write
\begin{gather*}
    \|u(k,x)\mid L_p\mid\ell_q(\nat_0)\|
    \quad\Big[\text{resp.\ \ }
    \|u(k,x)\mid\ell_q(\nat_0) \mid L_p\|\Big]
\end{gather*}
to indicate that we take first the $L_p$-norm and then the $\ell_q(\nat_0)$-norm [resp.\ first the $\ell_q(\nat_0)$-norm and then the $L_p$-norm]. Throughout, we use $j,k,m$ for discrete and $x,y,z$ for continuous variables, so there should be no confusion as to which variable is used for the $\ell_q(\nat_0)$-norm or $L_p$-norm.

We follow Triebel \cite[Definition 1.1 and Remark 1.2 (1.14), (1.15)]{T20} for the definition of the scales of Besov- and Triebel--Lizorkin spaces. Let $\Fcal u$ denote the Fourier transform of a function $u$; the extension to the space of tempered distributions $S'(\rn)$ is again denoted by $\Fcal$. Fix some $\phi_0\in C_0^\infty$ such that $\I_{B(0,1)} \leq \phi_0\leq  \I_{B(0,3/2)}$ and set  $\phi_k(x):= \phi_0(2^{-k}x)- \phi_0(2^{-(k+1)}x)$.
Since $\sum_{k=0}^\infty \phi_k (x)=1$, the sequence $(\phi_k)_{k\geq 0}$ is a \textbf{dyadic resolution of unity}. By
\begin{gather*}
    \phi_k (D) u(x)
    := \Fcal^{-1} (\phi_k\Fcal u)(x)
\end{gather*}
we denote the \textbf{pseudo-differential operator} (Fourier multiplier operator) with symbol $\phi_k$. We will also need the \textbf{dyadic cubes} $Q_{J,M}= 2^{-J} M + 2^{-J} (0,1)^n$, where $J\in \integer$, $M\in \integer^n$ and $(0,1)^n$ is the open unit cube in $\rn$.

\begin{definition}\label{fun-3}
Let $(\phi_k)_{k\geq 0}$  be any dyadic resolution of unity.
\begin{enumerate}
\item\label{fun-3-a}
Let $p\in (0,\infty]$, $q\in (0,\infty]$ and $s\in \real$. The \textbf{Besov space} $B_{p,q}^s$ is the family of all $u\in S'(\rn)$ such that the following (quasi-)norm is finite:
\begin{gather*}
    \|u\mid B_{p,q}^s \|
    := \| 2^{ks} \phi_k(D) u(x)\mid L_p\mid\ell_q(\nat_0)\|.
\end{gather*}
\item\label{fun-3-b}
Let $p\in (0,\infty)$, $q\in (0,\infty]$ and $s\in \real$. The \textbf{Triebel--Lizorkin space} $F_{p,q}^s$ is the family of all $f\in S'(\rn)$ such that the following (quasi-)norm is finite
\begin{gather*}
    \|u\mid F_{p,q}^s \|
    := \| 2^{ks} \phi_k(D) u(x)\mid\ell_q(\nat_0) \mid L_p\|.
\end{gather*}
\item\label{fun-3-c}
Let $p=\infty$, $q\in (0,\infty)$ and $s\in \real$. The \textbf{Triebel--Lizorkin space} $F_{\infty,q}^s$ is the family of all $f\in S'(\rn)$ such that the following (quasi-)norm is finite
\begin{gather*}
    \|u\mid F_{\infty,q}^s \|
    :=\sup_{J\in\nat_0, \, M\in \integer^n} 2^{Jn/q} \left(\int_{Q_{J,M}}  \sum_{k= J}^\infty 2^{ksq} |\phi_k(D) u(x)|^q \, dx \right)^{1/q}.
\end{gather*}
\item\label{fun-3-d}
Let $p=q=\infty$ and $s\in \real$. The \textbf{Triebel--Lizorkin space} $F_{\infty,\infty}^s$ is the family of all $f\in S'(\rn)$ such that the following norm is finite
\begin{gather*}
    \|u\mid F_{\infty,\infty}^s \|
    := \sup_{J\in\nat_0, \, M\in \integer^n} \sup_{x\in Q_{J,M}} \sup_{k\geq J}  2^{ks}|\phi_k(D) u(x)|.
\end{gather*}
\end{enumerate}
\end{definition}
Note that $F_{\infty,\infty}^s=B_{\infty,\infty}^s$ for all $s\in\real$ and that the norms appearing in Definition~\ref{fun-3}.\ref{fun-3-d} and \ref{fun-3}.\ref{fun-3-a} coincide if $p=q=\infty$: $\|u\mid F_{\infty,\infty}^s \|=\|u\mid B_{\infty,\infty}^s \|$. Definition~\ref{fun-3} does not depend on the choice of $(\phi_k)_{k\geq 0}$ since different resolutions of unity lead to equivalent (quasi-)norms. Various properties of these spaces as well as their relation to other classical function spaces can be found in Triebel \cite{T20}, see also \cite{T83} and \cite{T92}.

Consider the heat kernel (Gaussian probability density) related to the Laplace operator on $\rn$
\begin{equation}\label{pt}
    g_t(x):= \frac{1}{(4\pi t)^{n/2}} e^{-|x|^2/(4t)},\quad t>0,\;x\in\rn.
\end{equation}
We can use $g_t(x)$ to define a convolution operator on the space $B_b$ of bounded Borel functions $u:\rn\to\real$
\begin{equation}\label{Wt}
    W_t u(x) := g_t* u(x) = \int_\rn g_t(y-x) u(y)\, dy.
\end{equation}
For positive $u\geq 0$ the above integral always exists in $[0,\infty]$ and extends $W_t$ to all positive Borel functions.
It is not difficult to see that $g_{t+s}=g_t*g_s$, i.e.\ $(W_t)_{t\geq 0}$ is a semigroup. The operators are positivity preserving ($W_tu\geq 0$ if $u\geq 0$) and conservative ($W_t \I  \equiv 1$). If $u\in S$, then $\Fcal(W_t u)(\xi)= e^{-t |\xi|^2 } \Fcal u(\xi)$. We will need the following simple lemma. We provide the short proof for the readers' convenience.

\begin{lemma}\label{lqest}
Let $(W_t)_{t\geq 0}$ be the Gau{\ss}--Weierstra{\ss} semigroup.
\begin{enumerate}
\item\label{lqest-a}
    $W_t:L_p\to L_p$, $p\in [1,\infty]$ is a contraction, i.e.\
    \begin{gather*}
        \|W_t u\mid L_p\| \leq \|u\mid L_p\|.
    \end{gather*}
\item\label{lqest-b}
    Let $\psi_k(\cdot)$ be a sequence of positive measurable functions on $\rn$ such that $(\psi_k(x))_{k\geq 0}\in\ell_q(\nat_0)$ for some $q\in [1,\infty]$ and all $x\in\rn$. Then
    \begin{gather*}
        \| W_t \psi_k(x)\mid\ell_q(\nat_0)\|
        \leq W_t \|\psi_k(\cdot)\mid\ell_q(\nat_0)\|(x).
    \end{gather*}
\end{enumerate}
\end{lemma}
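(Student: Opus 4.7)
The two parts are standard consequences of the fact that $g_t(\cdot-x)\,dy$ is a probability measure on $\rn$; my plan is to reduce both statements to this observation.

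For part \ref{lqest-a}, the strategy is to apply Young's convolution inequality (or, equivalently, a direct Hölder estimate). Since $g_t\geq 0$ and $\int_\rn g_t(y)\,dy = 1$, one has $\|g_t\mid L_1\|=1$, hence $\|W_t u\mid L_p\|=\|g_t*u\mid L_p\|\leq \|g_t\mid L_1\|\cdot\|u\mid L_p\|=\|u\mid L_p\|$ for all $p\in[1,\infty]$. For the endpoint $p=\infty$ one may alternatively argue by $|W_t u(x)|\leq\|u\mid L_\infty\|\int_\rn g_t(y-x)\,dy=\|u\mid L_\infty\|$; for $p=1$ by Tonelli's theorem applied to $|u|$ and $g_t$.

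For part \ref{lqest-b} the key tool is Minkowski's integral inequality: taking the $\ell_q$-(quasi-)norm in the variable $k$ of a function given as an integral against the probability measure $g_t(y-x)\,dy$ is dominated by the integral of the $\ell_q$-norm. Concretely, because $\psi_k\geq 0$,
\begin{gather*}
    \|W_t\psi_k(x)\mid\ell_q(\nat_0)\|
    = \left\| \int_\rn g_t(y-x)\psi_k(y)\,dy \,\Big|\, \ell_q(\nat_0)\right\|
    \leq \int_\rn g_t(y-x)\,\|\psi_k(y)\mid \ell_q(\nat_0)\|\,dy,
\end{gather*}
and the right-hand side is exactly $W_t\|\psi_\cdot(\cdot)\mid\ell_q(\nat_0)\|(x)$. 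This needs $q\in[1,\infty]$ so that the $\ell_q$-norm is truly a norm and Minkowski applies; for $q=\infty$ the estimate is the trivial pointwise bound $\sup_k \int g_t(y-x)\psi_k(y)\,dy\leq \int g_t(y-x)\sup_k\psi_k(y)\,dy$.

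There is no genuine obstacle here; the only small point to be careful about is the role of positivity of $\psi_k$, which guarantees that all integrals are well-defined in $[0,\infty]$ and legitimises the use of Tonelli/Minkowski without integrability hypotheses beyond the assumption $(\psi_k(x))_{k\geq 0}\in\ell_q(\nat_0)$ at each $x$.
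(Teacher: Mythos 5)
Your proposal is correct. For part a) you invoke Young's inequality $\|g_t*u\mid L_p\|\leq\|g_t\mid L_1\|\,\|u\mid L_p\|$, whereas the paper derives the contraction directly from Jensen's inequality for the probability measure $g_t(y)\,dy$; since $\|g_t\mid L_1\|=1$, this is essentially the same observation packaged differently, and both arguments are complete for all $p\in[1,\infty]$. The genuine difference is in part b): you apply Minkowski's integral inequality for the $\ell_q(\nat_0)$-valued integral against the probability measure $g_t(y-x)\,dy$, treating $q=\infty$ by the trivial pointwise bound, while the paper argues by duality, pairing $(W_t\psi_k(x))_k$ with sequences $(a_k)\in\ell_{q'}$, using that $W_t$ is linear and positivity preserving together with H\"older's inequality in $\ell_q$--$\ell_{q'}$, and then taking the supremum over the unit ball of $\ell_{q'}$. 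The two routes are equivalent in substance --- the paper's duality computation is in effect a hands-on proof of exactly the instance of Minkowski's integral inequality you cite --- so your version is shorter and leans on a named theorem, while the paper's is self-contained and makes explicit that only linearity, positivity preservation and mass one of the kernel are used (a point exploited later in Remark 2.4 to extend Theorem 2.3 to general convolution Markov semigroups). Your closing remark about positivity of the $\psi_k$ legitimising Tonelli/Minkowski without extra integrability hypotheses is well taken and matches the hypotheses of the lemma; the restriction $q\in[1,\infty]$ you point out is exactly why the lemma (and the $F$-scale result built on it) does not cover $q<1$.
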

\begin{proof}
Part~\ref{lqest-a} follows immediately from Jensen's inequality, see \cite[Theorem 13.13]{mims}, for the probability measure $g_t(y)\,dy$:
\begin{align*}
    \|W_t u\mid L_p\|^p
    &= \int_\rn \left| \int_\rn  u(x-y) g_t(y)\, dy \right|^p  dx
    \leq \int_\rn \int_\rn  |u(x-y)|^p g_t(y)\, dy \, dx\\
    &=   \|u\mid  L_p\|^p \int_\rn  g_t(z)\, dz
    = \|u\mid  L_p\|^p.
\end{align*}
If $0<p<1$, the inequality is reversed.

\medskip
In order to prove Part~\ref{lqest-b}, we fix $x$ and pick a sequence $(a_k)_{k\in\nat_0}$ from $\ell_{q'}$ where $q' = \frac{q}{1-q}$. We have
\begin{gather*}
    \left|\left\langle W_t\psi_k(\cdot),a_k \right\rangle\right|
    \leq W_t\langle |\psi_k(\cdot)|,|a_k|\rangle
    \leq W_t \|\psi_k(\cdot)\mid\ell_q\|\cdot\|a_k\mid\ell_{q'}\|.
\end{gather*}
In the estimate we use the fact that $W_t$ is linear and positivity preserving, implying that $u\mapsto W_t u$ is monotone. Taking the supremum over all sequences such that $\|a_k\mid\ell_{q'}\|=1$ gives
\begin{gather*}
    \|W_t\psi_k(x)\mid\ell_q\|
    = \sup_{\|a_k\mid\ell_{q'}\| = 1} \left|\left\langle W_t\psi_k(x),a_k \right\rangle\right|
    \leq W_t\left( \|\psi_k(\cdot)\mid\ell_q\|\right)(x).
    \qedhere
\end{gather*}
\end{proof}

Lemma~\ref{lqest} is the key ingredient for our proof that $W_t$ is a contraction in the scales of Besov- and Triebel--Lizorkin spaces.

The next theorem is well-known for indices $1\leq p,q\leq\infty$. Our elementary proof also covers $0<q<1$ in the case of Besov spaces.

\begin{theorem}\label{contr}
  Let $(W_t)_{t\geq 0}$ be the Gau{\ss}--Weierstra{\ss} semigroup and $s\in \real$.
  \begin{enumerate}
  \item\label{contr-a}
  $\|W_t u \mid B_{p,q}^s\| \leq \|u \mid B_{p,q}^s\|$ for all $p\in [1,\infty]$ and $q\in (0,\infty]$.
  \item\label{contr-b}
  $\|W_t u \mid F_{p,q}^s\| \leq c_n\|u \mid F_{p,q}^s\|$ for all $p,q\in [1,\infty]$ with $c_n=2^n$ if $q=\infty$ and $c_n=1$ otherwise.
  \end{enumerate}
\end{theorem}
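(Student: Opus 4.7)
The plan is to exploit the fact that $W_t = e^{t\Delta}$ is a Fourier multiplier with symbol $e^{-t|\xi|^2}$, so it commutes with every Littlewood--Paley block: $\phi_k(D) W_t u = W_t \phi_k(D) u$. Thus in every quasi-norm of Definition~\ref{fun-3} the effect of $W_t$ moves past $\phi_k(D)$ and $2^{ks}$, reducing the theorem to inequalities about $W_t$ acting on an $L_p(\ell_q)$- or $\ell_q(L_p)$-valued object, which is exactly what Lemma~\ref{lqest} is designed to handle.

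For part~\ref{contr-a} I would, after the commutation, first compute the $L_p$-norm:
\begin{gather*}
    \|W_t u \mid B_{p,q}^s\|
    = \|2^{ks}\, W_t\phi_k(D) u(x)\mid L_p\mid \ell_q(\nat_0)\|.
\end{gather*}
Lemma~\ref{lqest}.\ref{lqest-a} gives $\|W_t\phi_k(D) u\mid L_p\|\leq \|\phi_k(D) u\mid L_p\|$ for every $p\in[1,\infty]$, so the bracketed sequence is pointwise dominated (in $k$) by $(2^{ks}\|\phi_k(D) u\mid L_p\|)_{k\geq 0}$, and taking the outer $\ell_q$-(quasi-)norm needs no restriction on $q\in(0,\infty]$. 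This yields $\|W_t u \mid B_{p,q}^s\|\leq \|u \mid B_{p,q}^s\|$.

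For part~\ref{contr-b} the order of norms is reversed, so I would first absorb $W_t$ through the $\ell_q$-norm. Since $W_t$ is convolution with a positive kernel we have the elementary bound $|W_t v(x)|\leq W_t|v|(x)$, hence, applying Lemma~\ref{lqest}.\ref{lqest-b} to the positive functions $\psi_k(x)=2^{ks}|\phi_k(D) u(x)|$,
\begin{gather*}
    \|2^{ks}\, W_t\phi_k(D) u(x)\mid \ell_q(\nat_0)\|
    \leq \|2^{ks}\, W_t|\phi_k(D) u|(x)\mid \ell_q(\nat_0)\|
    \leq W_t\bigl(\|2^{ks}\phi_k(D) u(\cdot)\mid \ell_q(\nat_0)\|\bigr)(x).
\end{gather*}
This step forces $q\in[1,\infty]$. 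Taking the outer $L_p$-norm and invoking Lemma~\ref{lqest}.\ref{lqest-a} once more (which requires $p\in[1,\infty]$) produces
\begin{gather*}
    \|W_t u \mid F_{p,q}^s\|
    \leq \bigl\| W_t\bigl(\|2^{ks}\phi_k(D) u(\cdot)\mid\ell_q\|\bigr)(x) \mid L_p \bigr\|
    \leq \|u\mid F_{p,q}^s\|.
\end{gather*}

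The main obstacle is the Triebel--Lizorkin case: we must interchange the action of $W_t$ and the inner $\ell_q$-norm before the $L_p$-contraction can be used, and this Minkowski-type interchange is precisely what Lemma~\ref{lqest}.\ref{lqest-b} supplies under $q\geq 1$. By contrast, for Besov spaces the outer $\ell_q$-(quasi-)norm is taken after $W_t$ is already evaluated on $L_p$-scalars, so no interchange is needed and $0<q<1$ is admissible. Positivity of the heat kernel is used twice: once to pass from $W_t\phi_k(D) u$ to $W_t|\phi_k(D) u|$, and once (implicitly, through Lemma~\ref{lqest}.\ref{lqest-b}) to realise $W_t$ as a positive operator compatible with order-duality against $\ell_{q'}$.
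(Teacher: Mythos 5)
Your argument for part~\ref{contr-a}, and for part~\ref{contr-b} in the range $p\in[1,\infty)$, $q\in[1,\infty]$, is correct and coincides with the paper's proof: commute $W_t$ with $\phi_k(D)$, use the $L_p$-contraction of Lemma~\ref{lqest}.\ref{lqest-a} for the Besov scale, and for the $F$-scale first the pointwise bound $|W_t v|\leq W_t|v|$ together with the $\ell_q$-interchange of Lemma~\ref{lqest}.\ref{lqest-b}, then the $L_p$-contraction.

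There is, however, a genuine gap: part~\ref{contr-b} is claimed for \emph{all} $p,q\in[1,\infty]$, and for $p=\infty$ the Triebel--Lizorkin norm is not the iterated norm $\|\cdot\mid\ell_q\mid L_\infty\|$ on which your argument rests. By Definition~\ref{fun-3}.\ref{fun-3-c} the $F_{\infty,q}^s$-norm is a supremum over dyadic cubes $Q_{J,M}$ of the averaged local quantities $\big(\fint_{Q_{J,M}}\sum_{k\geq J}2^{ksq}|\phi_k(D)u(x)|^q\,dx\big)^{1/q}$, with the $k$-sum truncated at $k\geq J$, and Lemma~\ref{lqest} does not apply directly to this localized expression because the convolution $W_t$ moves mass across cube boundaries. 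The paper devotes a separate argument to this case: Jensen's inequality for the probability measure $g_t(y)\,dy$ gives $\fint_{Q_{J,M}}|W_t w|^q\,dx\leq\sup_{y\in\rn}\fint_{y+Q_{J,M}}|w|^q\,dx$; the shifted cube $Q=y+Q_{J,M}$ meets at most $2^n$ dyadic cubes $Q_{J,N}$, and its average is written as a convex combination $\sum_N\lambda_{Q,J,N}\fint_{Q_{J,N}}|w|^q\,dx$ with $\sum_N\lambda_{Q,J,N}=1$; applying this with $w=\phi_k(D)u$, multiplying by $2^{ksq}$ and summing over $k\geq J$ yields the bound by $\|u\mid F_{\infty,q}^s\|^q$. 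The remaining case $p=q=\infty$ is immediate since $F_{\infty,\infty}^s=B_{\infty,\infty}^s$ with coinciding norms, so it follows from part~\ref{contr-a}. Without an argument of this kind (or an explicit restriction to $p<\infty$), your proof of part~\ref{contr-b} does not cover the endpoint $p=\infty$.
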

\begin{proof}
We use Definition~\ref{fun-3} to introduce the respective (quasi-)norms. Note that the operators $W_t$ and $\phi_k(D)$ commute since their symbols (Fourier multipliers) do not depend on $x$.

\medskip
\ref{contr-a}\ \ Fix $p\in [1,\infty]$, $q\in (0,\infty]$ and $s\in\real$ and let $u\in B_{p,q}^s$. Note that $\phi_k(D)u\in L_p$. Since $W_t$ is a contraction in $L_p$---see Lemma~\ref{lqest}.\ref{lqest-a}---, we get
\begin{align*}
    \|W_t u\mid B_{p,q}^s \|
    &=  \| 2^{ks} W_t  \phi_k (D) u(x)\mid L_p\mid \ell_q(\nat_0)\| \\
    &\leq \| 2^{ks}  \phi_k (D) u(x)\mid L_p \mid \ell_q(\nat_0)\|
    = \|u\mid B_{p,q}^s\|.
\end{align*}
The calculation above uses only $p\geq 1$ and does not impose any restriction on $q>0$ and $s\in\real$.

\medskip
\ref{contr-b}\ \ Fix  $p \in [1,\infty)$, $q\in [1,\infty]$,  $s\in\real$,  and let $u\in F_{p,q}^s(\real)$. Note that $\phi_k(D)u$ is measurable. Using the contractivity properties of $W_t$ from Lemma~\ref{lqest}, we get
\begin{align*}
    \|W_t u\mid F_{p,q}^s\|
    &= \|2^{ks} W_t \phi_k (D)u(x) \mid \ell_q(\nat_0) \mid L_p \| \\
    &\leq \|2^{ks} W_t (|\phi_k (D)u|)(x) \mid \ell_q(\nat_0) \mid L_p \| \\
    &\leq \| W_t (\|2^{ks} |\phi_k (D)u| \mid \ell_q(\nat_0)\|)(x) \mid L_p \| \\
    &\leq \|2^{ks} \phi_k (D)u(x) \mid \ell_q(\nat_0) \mid L_p\|
    = \|u\mid F_{p,q}^s \|.
\end{align*}

We will now consider the case $p=\infty$ and $q\in [1,\infty)$. As before, we write $Q_{J,M}$ for the open cube in $\rn$ with side-length $2^{-J}$ and ``lower left corner'' $2^{-J}M\in\integer^n$. Below we use the notation $\fint_Q u(x)\,dx$ to denote $\mathrm{Leb}(Q)^{-1}\int_Q u(x)\,dx$. We can rewrite the norm for $F_{\infty,q}^s$ as
\begin{equation}\label{snorm}
     \|u\mid F_{\infty,q}^s \|
     = \sup_{J\in\nat_0, M\in\integer^n} \bigg( \fint_{Q_{J,M}} \sum_{k\geq J} 2^{ksq} |\phi_k (D) u|^q \,dx \bigg)^{1/q}.
\end{equation}
In order to estimate the norm $\|W_t u\mid F_{\infty,q}^s\|$, we begin with an auxiliary estimate. Fix $J\in\nat_0$ and $M\in\integer^n$. By Jensen's inequality,
\begin{align*}
    \fint_{Q_{J,M}} |W_t w(x)|^q\, dx
    &\leq \fint_{Q_{J,M}} \int_\rn g_t(x-y) |w(y)|^q \, dy \, dx \\
    &= \int_\rn \fint_{Q_{J,M}} g_t(y) |w(x-y)|^q \, dx \, dy \\
    &\leq \sup_{y\in\rn} \fint_{Q_{J,M}}  |w(x-y)|^q\, dx \cdot \int_{\rn}  g_t(y)\, dy \\
    &=  \sup_{y\in\rn} \fint_{y+Q_{J,M}} |w(x)|^q\, dx.
\end{align*}
The shifted cube $Q:= y+Q_{J,M}$ does, in general, not coincide with any of the $Q_{J,N}$, $N\in\integer^n$. Since $Q$ has side-length $2^{-J}$ it intersects at most $2^n$ of the $Q_{J,N}$, $N\in\integer$. Define
\begin{gather*}
    \lambda_{Q,J,N}:= \frac{\int_{Q\cap Q_{J,N}} |w(x)|^q\, dx}{\int_{Q_{J,N}} |w(x)|^q\,dx},
\end{gather*}
and observe that $0\leq \lambda_{Q,J,N}\leq 1$ and $\sum_{N\in\integer^n} \lambda_{Q,J,N} \leq \sum_{N\in\integer^n} \I_{Q\cap Q_{J,N}}  \leq 2^n$ since the sum contains at most $2^n$ non-zero elements. Since $\mathrm{Leb}(Q)=\mathrm{Leb}(Q_{J,N})$, we get
\begin{align*}
    \fint_{Q} |w(x)|^q\, dx
    = \sum_{N\in\integer^n} \frac 1{\mathrm{Leb}(Q)}\int_{Q\cap Q_{J,N}} |w(x)|^q\, dx
    = \sum_{N\in\integer^n} \lambda_{Q,J,N} \fint_{Q_{J,N}} |w(x)|^q\,dx.
\end{align*}
Moreover, observing that $Q=y+Q_{J,M}$ and $\lambda_{N,J,Q}\leq 1$ we have
\begin{align*}
    \fint_{Q_{J,M}} | W_t w(x)|^q\, dx
    \leq \sup_{y\in\rn} \left(\sum_{N\in\integer^n, Q\cap Q_{J,N}\neq \emptyset} \fint_{Q_{J,N}} |w(x)|^q\,dx\right).
\end{align*}
Now repeat the above calculations with $|W_tw|^q = |W_t\phi_k(D)u|^q$ and $|w|^q = |\phi_k(D)u|^q$, multiplied by $2^{ksq}$ and summed over $k\geq J$. Since we have only positive terms, the summation and integration signs can be freely interchanged. Thus,
\begin{align*}
    \fint_{Q_{J,M}} \sum_{k\geq J} 2^{ksq} |W_t \phi_k(D) u(x)|^q\, dx
    &\leq \sup_{y\in\rn}\left(\sum_{N\in\integer^n, Q\cap Q_{J,N}\neq \emptyset} \fint_{Q_{J,N}} \sum_{k\geq J} 2^{ksq} |\phi_k(D) u(x)|^q\,dx\right)\\
    &\leq 2^n \|u \mid F_{\infty,q}^s\|^q
\end{align*}

Finally, for $p=q=\infty$, the estimate is immediate using Lemma~\ref{lqest} and Definition~\ref{fun-3}.\ref{fun-3-d}.
\end{proof}

\begin{remark}
In the proof of Theorem~\ref{contr} and Lemma~\ref{lqest} we only use the following properties of the semigroup $(W_t)_{t\geq 0}$:
\begin{gather*}
    0\leq u\leq 1\implies 0\leq W_t u\leq 1
    \quad\text{and}\quad
    \phi_k(D) W_t = W_t \phi_k(D).
\end{gather*}
This means that Theorem~\ref{contr} holds for \textbf{every} positivity preserving, Markovian semigroup $T_t$ which is given by a convolution: $T_t u = u*\pi_t$ where $(\pi_t)_{t\geq 0}$ is a convolution semigroup of probability measures on $\rn$. These semigroups can be completely characterized using the Fourier transform. One has, see \cite[Section~3.6]{jacob}
\begin{gather*}
    \Fcal (T_t u)(\xi) = e^{-t\psi(\xi)}\Fcal u(\xi), \quad t>0,\; \xi\in\rn
\end{gather*}
where $\psi:\rn\to\comp$ is a \textbf{continuous, negative definite function} (in the sense of Schoenberg) such that $\psi(0)=0$. All such $\psi$ are uniquely characterized by their L\'evy--Khintchine representation
\begin{gather*}
    \psi(\xi)
    = i\ell\cdot\xi + \frac 12 \xi\cdot Q\xi + \int_{y\neq 0}\left(1-e^{iy\cdot\xi} + iy\cdot\xi \I_{(0,1)}(|y|)\right)\nu(dy)
\end{gather*}
such that $\ell\in\rn$, $Q\in\real^{n\times n}$ is positive semidefinite and $\nu$ is a Radon measure on $\rn\setminus\{0\}$ such that $\int_{y\neq 0} \min\{|y|^2, 1\}\,\nu(dy)<\infty$. Typical examples are $\psi(\xi)=|\xi|^2$ (leading to the Gau{\ss}--Weierstra{\ss} semigroup), $\psi(\xi)=|\xi|$ (leading to the Cauchy--Poisson semigroup), $\psi(\xi)=|\xi|^\alpha$, $0<\alpha<2$ (leading to the symmetric stable semigroups), but also $\psi(\xi)=\log(1+|\xi|)$ and many others. These semigroups appear in the study of L\'evy processes, see  e.g.\ \cite{sato13,jacob,barca}.

It is worth noting that $\psi(\xi)$ can grow at most like $|\xi|^2$ as $|\xi|\to\infty$. Although the multipliers $e^{-t|\xi|^{\beta}}$, $\beta>2$, will lead to semigroups, these semigroups are not any longer positivity preserving.
\end{remark}

\section{Bochner's subordination}\label{sub}

In the paper \cite{bochner49} S.\ Bochner started to study initial-value problems of the form
\begin{equation}\label{sub-e04}
    \left\{\begin{aligned}
        \frac{\partial}{\partial t} u(t,x) &= -f(-\Delta_x) u(t,x),\quad t>0,\; x\in\rn,\\
        u(0,x) &= u_0(x),\quad t=0,\; x\in\rn,
    \end{aligned}\right.
\end{equation}
where $\Delta_x$ denotes the Laplace operator on $\rn$ and $f:[0,\infty)\to[0,\infty)$ is a \textbf{Bernstein function} (see Theorem~\ref{sub-03} below). Typical examples are $f(\lambda) = \lambda^\alpha$, $0<\alpha<1$ or $f(\lambda) = \sqrt{\lambda+c}-\sqrt c$. We may study the problem \eqref{sub-e04} in any of the Banach spaces $L_p, 1\leq p<\infty$ or $C_\infty = \{u\in C : \lim_{|x|\to\infty} u(x) = 0\}$; throughout this section we write just $\Xcal$.

From Bochner's representation theorem for positive definite functions we know that there is a family of probability measures $(\mu_t^f)_{t\geq 0}$ on $[0,\infty)$ such that $\phi_t(\lambda)=\exp(-t f(\lambda))$ is their Laplace transform:
\begin{equation}\label{sub-e05}
    \int_0^\infty e^{-\lambda r}\,\mu_t^f(dr) = e^{-t f(\lambda)},\quad t > 0,\; \lambda\geq 0.
\end{equation}
Since $t\mapsto e^{-t f}$ is continuous and satisfies $e^{-(t+s)f}=e^{-tf} e^{-sf}$, it is clear that $(\mu_t^f)_{t\geq 0}$ is a semigroup w.r.t.\ convolution of measures on $[0,\infty)$ which is vaguely (i.e.\ in the weak-$*$ sense) continuous in the parameter $t>0$. Notice that all vaguely continuous convolution semigroups are uniquely determined by their exponent $f$. We may even characterize all such exponents.

\begin{theorem}[Schoenberg]\label{sub-03}
    A function $f:(0,\infty)\to (0,\infty)$ such that $f(0+)=0$ is the characteristic exponent of a vaguely continuous convolution semigroup if, and only if, one of the following equivalent conditions hold
    \begin{enumerate}
    \item\label{sub-03-a}
        $f$ is a \textbf{\upshape Bernstein function}, i.e.\ $f\in C^\infty(0,\infty)$, $f\geq 0$ and $(-1)^{n-1} f^{(n)}\geq 0$, $n\in\nat$;
    \item\label{sub-03-b}
        $e^{-t f}$ is for each $t>0$ a positive definite function;
    \item\label{sub-03-c}
        $f$ has the following L\'evy--Khintchine representation
        \begin{gather*}
            f(\lambda) = b\lambda + \int_0^\infty (1-e^{-\lambda r})\,\nu(dr),\quad\lambda>0,
        \end{gather*}
        with $b\geq 0$ and a measure $\nu$ on $(0,\infty)$ such that $\int_0^\infty \min\{r,1\}\,\nu(dr)<\infty$.
    \end{enumerate}
\end{theorem}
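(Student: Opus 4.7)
The plan is to run a short circle of implications, with Bernstein's classical theorem (every completely monotone function on $(0,\infty)$ is the Laplace transform of a unique Radon measure on $[0,\infty)$) serving as the main external ingredient. Concretely, I would establish (c)~$\Rightarrow$~(a), then (a)~$\Rightarrow$~(c), and finally derive (b) together with the existence of the vaguely continuous convolution semigroup directly from (a).

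For (c)~$\Rightarrow$~(a), the plan is to differentiate $f(\lambda) = b\lambda + \int_0^\infty (1-e^{-\lambda r})\,\nu(dr)$ under the integral sign. On any set $\{\lambda \geq \lambda_0 > 0\}$ the bounds $r^n e^{-\lambda r} \leq r$ for $r \leq 1$ and $n \geq 1$, together with rapid decay of $r^n e^{-\lambda r}$ for $r \geq 1$, legalize the interchange by dominated convergence using $\int_0^\infty \min\{r,1\}\,\nu(dr) < \infty$. This should yield $(-1)^{n-1} f^{(n)}(\lambda) = \int_0^\infty r^n e^{-\lambda r}\,\nu(dr) \geq 0$ for every $n \geq 1$, while $f(0+) = 0$ follows by monotone convergence on $1-e^{-\lambda r}$.

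For (a)~$\Rightarrow$~(c), I would observe that the Bernstein conditions on $f$ are exactly the statement that $f'$ is completely monotone, since $(-1)^k (f')^{(k)} = (-1)^k f^{(k+1)} \geq 0$ for all $k \geq 0$. Bernstein's theorem then furnishes a Radon measure $\tilde\mu$ on $[0,\infty)$ with $f'(\lambda) = \int_{[0,\infty)} e^{-\lambda r}\,\tilde\mu(dr)$. Setting $b := \tilde\mu(\{0\})$, integrating from $0$ to $\lambda$ and applying Fubini should give $f(\lambda) - f(0+) = b\lambda + \int_{(0,\infty)} r^{-1}(1-e^{-\lambda r})\,\tilde\mu(dr)$; combined with the normalization $f(0+)=0$ and the choice $\nu(dr) := r^{-1}\tilde\mu(dr)$ on $(0,\infty)$, this yields the L\'evy--Khintchine representation. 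The integrability $\int_0^\infty \min\{r,1\}\,\nu(dr) < \infty$ will follow from finiteness of $f$ at any single $\lambda > 0$ using the comparison $(1-e^{-\lambda r})/r \asymp \min\{r,1\}$.

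To extract (b) together with the convolution semigroup, I would invoke the composition principle that a completely monotone function composed with a Bernstein function is again completely monotone. Since $\lambda \mapsto e^{-t\lambda}$ is completely monotone, so is $\lambda \mapsto e^{-tf(\lambda)}$ for every $t > 0$; a second application of Bernstein's theorem then produces a Radon measure $\mu_t^f$ on $[0,\infty)$ whose Laplace transform is $e^{-tf}$. Evaluation at $\lambda = 0+$ shows $\mu_t^f$ is a probability measure; the functional identity $e^{-(t+s)f} = e^{-tf}\cdot e^{-sf}$ combined with uniqueness of Laplace transforms forces $\mu_{t+s}^f = \mu_t^f \ast \mu_s^f$; and pointwise continuity of $t \mapsto e^{-tf(\lambda)}$ yields vague continuity of $t \mapsto \mu_t^f$ via the continuity theorem for Laplace transforms. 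The main obstacle is really only the two black-box ingredients---Bernstein's theorem and the composition rule ``completely monotone $\circ$ Bernstein = completely monotone''---both of which are classical and may be cited from standard references on Bernstein functions; everything else is bookkeeping with Fubini, dominated/monotone convergence, and uniqueness of Laplace transforms.
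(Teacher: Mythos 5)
The paper does not prove this theorem at all: it is quoted as a standard result with pointers to \cite[Chapter~3]{SSV} and \cite[Sections~3.9.2--3.9.7]{jacob}, so there is no in-paper argument to compare with; your route (Bernstein's theorem for completely monotone functions plus the rule ``completely monotone $\circ$ Bernstein $=$ completely monotone'') is exactly the route of those references, and the parts you carry out are essentially sound: (c)$\Rightarrow$(a) by differentiation under the integral, (a)$\Rightarrow$(c) via complete monotonicity of $f'$, and (a)$\Rightarrow$(b) together with the construction of the vaguely continuous convolution semigroup $(\mu_t^f)_{t\ge 0}$ from uniqueness and continuity of Laplace transforms.

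There is, however, a genuine gap: you only establish one direction of the asserted equivalences. The theorem is an ``if and only if'' and claims that (a), (b), (c) are mutually equivalent, but your chain is (a)$\Leftrightarrow$(c), (a)$\Rightarrow$(b)$\Rightarrow$ existence of the semigroup; you never return from (b), nor from ``$f$ is a characteristic exponent'', to (a). Concretely, the missing step is: if $e^{-tf}$ is positive definite (equivalently, by Bernstein--Widder, completely monotone, i.e.\ the Laplace transform of a measure $\mu_t$ on $[0,\infty)$, which has mass $1$ since $f(0+)=0$) for every $t>0$, then $f$ is a Bernstein function. The standard argument writes $f(\lambda)=\lim_{t\downarrow 0}t^{-1}\bigl(1-e^{-tf(\lambda)}\bigr)$, notes that
\begin{gather*}
    t^{-1}\bigl(1-e^{-tf(\lambda)}\bigr)=\int_{(0,\infty)}\bigl(1-e^{-\lambda r}\bigr)\,t^{-1}\mu_t(dr)
\end{gather*}
is of the form (c), hence Bernstein, and then invokes the closure of the Bernstein class under pointwise limits --- a nontrivial (if standard) fact usually proved by a vague-compactness/Helly argument on the representing measures. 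Without this, neither the ``only if'' half of the statement nor the equivalence of (b) with (a) and (c) is proved. Two minor slips, easily repaired: for $n=1$ your identity $(-1)^{n-1}f^{(n)}(\lambda)=\int_0^\infty r^n e^{-\lambda r}\,\nu(dr)$ omits the drift $b$; and the comparison you need is $1-e^{-\lambda r}\asymp_\lambda \min\{r,1\}$ (not $(1-e^{-\lambda r})/r\asymp\min\{r,1\}$) to deduce $\int_0^\infty\min\{r,1\}\,\nu(dr)<\infty$ from finiteness of $f$ at a single point.
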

This is a standard result, see e.g.\ \cite[Chapter~3]{SSV} or Jacob~\cite[Sections~3.9.2--3.9.7]{jacob}. Notice that Bernstein functions are automatically strictly increasing.

\bigskip
Bochner showed that the problem \eqref{sub-e04} is solved by the semigroup
\begin{equation}\label{sub-e06}
    W_t^f u_0(x) := \int_0^\infty W_r u_0(x)\,\mu^f_t(dr)
\end{equation}
where $(W_t)_{t\geq 0}$, $W_t = e^{t\Delta}$, is the Gau{\ss}--Weierstra{\ss} semigroup generated by the Laplacian $\Delta$. The integral appearing in \eqref{sub-e06} is understood in a pointwise sense. Moreover, the family $(W_t^f)_{t\geq 0}$ inherits many properties of the semigroup $(W_t)_{t\geq 0}$: it is a semigroup on the same Banach space $\Xcal$ as $(W_t)_{t\geq 0}$, it is again strongly continuous, contractive, positivity preserving and conservative. The infinitesimal generator of $(W_t^f)_{t\geq 0}$ is a function of the Laplacian $-f(-\Delta)$, e.g.\ in the sense of spectral calculus, see \cite[Chapter~13]{SSV}.

\begin{remark}\label{sub-05}
    The formula \eqref{sub-e06} still makes sense for general strongly continuous contraction semigroups $(T_t)_{t\geq 0}$ on abstract Banach spaces $(\Xcal, \nnorm{\cdot})$. The resulting subordinate semigroup $(T_t^f)_{t\geq 0}$ inherits all essential properties of $(T_t)_{t\geq 0}$ such as strong continuity and contractivity and---if applicable---it preserves positivity and is conservative whenever $(T_t)_{t\geq 0}$ is. Using the L\'evy--Khintchine representation of $f$ it is possible to give an explicit formula of the infinitesimal generator of $(T_t^f)_{t\geq 0}$ as a function of the generator of $(T_t)_{t\geq 0}$, see \cite[Theorem~13.6]{SSV}.
\end{remark}

Let us return to the Gau{\ss}--Weierstra{\ss} semigroup. Recall from \eqref{pt} and \eqref{Wt} that
\begin{gather*}
    \smash[b]{\Fcal(W_t u)(\xi) = e^{-t|\xi|^2} \Fcal u(\xi)}
\intertext{and}
    \smash[t]{W_t u(x) = g_t*u(x) = (4\pi t)^{-n/2} \int_\rn e^{-(x-y)^2/(4t)} u(y)\,dy}
\end{gather*}
whenever these expressions make sense, e.g.\ if $u\in S$ (for the first formula) and $u\in L_p$ or $u\geq 0$ and measurable (for the second).

\begin{lemma}\label{sub-07}
    Let $f:(0,\infty)\to (0,\infty)$ be a Bernstein function. The semigroup $(W_t^f)_{t\geq 0}$ subordinate to the heat semigroup $(W_t)_{t\geq 0}$ satisfies
    \begin{gather*}
        \Fcal(W_t^f u)(\xi) = e^{-t f(|\xi|^2)} \Fcal u(\xi),
        \quad t>0,\; u\in S,
    \end{gather*}
    and if $g_t^f(x) := \int_0^\infty g_r(x)\,\mu^f_t(dr) = \int_0^\infty (4\pi r)^{-n/2} e^{- x^2/(4r)}\,\mu^f_t(dr)$ is the generalized heat kernel,
    \begin{gather*}
        W_t^f u(x) = g_t^f*u(x) = \int_\rn\int_0^\infty (4\pi r)^{-n/2} e^{-(x-y)^2/(4t)} u(y)\,\mu^f_t(dr)\,dy,
        \quad t>0, \; u\in L_p.
    \end{gather*}
\end{lemma}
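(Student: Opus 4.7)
The plan is to derive both identities from Fubini's theorem, combined with the Fourier identity $\Fcal(W_r u)(\xi) = e^{-r|\xi|^2}\Fcal u(\xi)$ for the underlying Gau{\ss}--Weierstra{\ss} semigroup and the defining Laplace relation~\eqref{sub-e05} for the subordinator $(\mu^f_t)_{t\geq 0}$.

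For the Fourier formula I would fix $u \in S$ and $\xi \in \rn$, and note that $W_r u = g_r * u$ is bounded in $L_1$ and $L_\infty$ by $\|u \mid L_1\|$ and $\|u \mid L_\infty\|$ respectively, uniformly in $r>0$. Since $\mu^f_t$ is a probability measure, the integrand $(x,r) \mapsto e^{-ix\cdot\xi} W_r u(x)$ is absolutely integrable on $\rn \times (0,\infty)$ with respect to $dx \otimes \mu^f_t(dr)$, and Fubini then gives
\begin{gather*}
    \Fcal(W_t^f u)(\xi) = \int_0^\infty \Fcal(W_r u)(\xi)\,\mu^f_t(dr) = \Fcal u(\xi) \int_0^\infty e^{-r|\xi|^2}\,\mu^f_t(dr) = e^{-tf(|\xi|^2)} \Fcal u(\xi),
\end{gather*}
the last equality being \eqref{sub-e05} at $\lambda = |\xi|^2$.

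For the convolution representation I would insert $W_r u(x) = \int_\rn g_r(x-y) u(y)\,dy$ into the definition~\eqref{sub-e06} of $W_t^f u$ and interchange the order of integration. For $u \geq 0$ this interchange is automatic by Tonelli and produces the claimed formula with $g_t^f(x) = \int_0^\infty g_r(x)\,\mu^f_t(dr)$; in particular $g_t^f \in L_1(\rn)$ with integral one. For general $u \in L_p$, $1 \leq p \leq \infty$, I would decompose $u = u^+ - u^-$ and check that
\begin{gather*}
    h(x) := \int_0^\infty W_r|u|(x)\,\mu^f_t(dr)
\end{gather*}
is finite for a.e.\ $x$; Minkowski's integral inequality combined with Lemma~\ref{lqest}.\ref{lqest-a} gives
\begin{gather*}
    \|h \mid L_p\| \leq \int_0^\infty \|W_r|u| \mid L_p\|\,\mu^f_t(dr) \leq \|u \mid L_p\|,
\end{gather*}
so Fubini may then be applied separately to $u^+$ and $u^-$ to arrive at $W_t^f u(x) = g_t^f * u(x)$ for a.e.\ $x$.

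The only non-routine step I expect is the Fubini justification in the convolution identity: the integrand $(y,r) \mapsto g_r(x-y) u(y)$ is not obviously absolutely integrable in $(y,r)$ for every fixed $x$, and the $L_p$-bound on $h$ displayed above is precisely what secures this for a.e.\ $x$. Everything else is a direct computation based on the Laplace-transform definition of $\mu^f_t$ and the contractivity of $W_r$ on $L_p$.
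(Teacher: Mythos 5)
Your proposal is correct and follows essentially the same route as the paper: take Fourier transforms under the subordination integral, justify the interchange by Fubini, and identify $\int_0^\infty e^{-r|\xi|^2}\,\mu_t^f(dr)=e^{-tf(|\xi|^2)}$ via \eqref{sub-e05}, with the convolution formula obtained by a second Fubini/Tonelli argument. The paper leaves these integrability checks implicit (``a similar Fubini-argument''), whereas you spell them out via the uniform $L_1$-bound on $W_ru$ and the Minkowski-plus-contractivity bound on $h$, which is exactly the right justification.
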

\begin{proof}
    Taking Fourier transforms on both sides of \eqref{sub-e06} with $u_0=u\in S$ gives
    \begin{align*}
        \Fcal(W^f_{t} u)(\xi)
        = \int_0^\infty \Fcal(W_r u)(\xi)\,\mu_t^f(dr)
        = \int_0^\infty e^{-r|\xi|^2}\,\mu_t^f(dr)\,\Fcal u(\xi)
        = e^{-tf(|\xi|^2)}\,\Fcal u(\xi)
    \end{align*}
    where we use Theorem~\ref{sub-03}. The second assertion follows from a similar Fubini-argument.
\end{proof}

\begin{example}\label{sub-09}
    Let $f(\lambda) = f_\alpha(\lambda) = \lambda^\alpha$ for $\lambda\geq 0$ and $0<\alpha <1$. In this case we write $W^{(\alpha)}$ and $g^{(\alpha)}_t$ instead of $W^f_t$ and $g^{f}_t$.

    The L\'evy--Khintchine representation of $f_\alpha$ is
    \begin{gather*}
        \lambda^\alpha = \frac{\alpha}{\Gamma(1-\alpha)}\int_0^\infty (1-e^{-\lambda r}) r^{-\alpha-1}\,dr
    \end{gather*}
    and the Fourier transform of the generalized heat kernel is
    \begin{gather*}
        (2\pi)^{n/2}\Fcal g_t^{(\alpha)}(\xi) = e^{- t |\xi|^{2\alpha}}.
    \end{gather*}
    It is obvious, that $g_t^{(\alpha)}(y)$ is a function, but only for $\alpha = \frac 12$ there seems to be a closed representation with elementary functions
    \begin{gather*}
        g_t^{(1/2)}(x) = \frac {\Gamma\left(\frac{n+1}{2}\right)}{\pi^{(n+1)/2}} \frac{t}{(t^2+|x|^2)^{(n+1)/2}}.
    \end{gather*}
\end{example}

\begin{remark}\label{sub-11}
    It is possible to associate with every vaguely continuous convolution semigroup of measures $(\mu_t^f)_{t\geq 0}$ on $[0,\infty)$ a random process $(S_t^f)_{t\geq 0}$ such that
    \begin{gather*}
        \Pp(S_t^f \in A) = \mu_t^f(A),\quad A\in\Bcal[0,\infty).
    \end{gather*}
    The processes $(S_t^f)_{t\geq 0}$ are called \textbf{subordinators}. One can show that a subordinator is a random process with stationary and independent increments and right-continuous trajectories $t\mapsto S_t$ (L\'evy process) such that $S_0=0$ and $t\mapsto S_t$ is increasing. This allows us to write for any bounded or positive Borel function $g$
    \begin{gather*}
        \int_0^\infty g(r)\,\mu_t^f(dr)
        \quad\text{as an expected value}\quad
        \Ee \left[g(S_t^f)\right];
    \end{gather*}
    this will be useful later on, in order to calculate certain constants.

    If $f(\lambda)=\lambda^\alpha$, the corresponding process $(S_t^{(\alpha)})_{t\geq 0}$ is usually called an \textbf{$\alpha$-stable subordinator}.
\end{remark}

\section{Fractional caloric smoothing}\label{est}

Let $s\in\real$ and $0<p,q\leq\infty$. Denote by $A_{p,q}^s$ one of the spaces $B_{p,q}^s$ or $F_{p,q}^s$ and write $\nnorm{u\mid A_{p,q}^s}$ for its (quasi-)norm. As before, $(W_t)_{t\geq 0}$ is the heat semigroup. We have seen in Theorem~\ref{contr} that $W_t$ is a contraction in the $B$-scale if $s\in\real$, $1\leq p\leq\infty$, $0<q\leq\infty$ and in the $F$-scale if $s\in\real$, $1\leq p, q\leq\infty$. The following caloric smoothing estimate can be found in \cite[Theorem~3.35]{T20}: {\itshape For every $d\geq 0$ there is a constant $c>0$ such that
\begin{equation}\label{est-e10}
    \nnorm{W_t u \mid A_{p,q}^{s+d}}
    \leq c t^{-d/2} \nnorm{u \mid A_{p,q}^s}
    \quad\text{for all $0<t\leq 1$ and $u\in A_{p,q}^s$}.
\end{equation}}

If we want to prove the analogous result for the semigroup $W_t^{(\alpha)}$ generated by the fractional Laplacian $-(-\Delta)^{\alpha}$, $0<\alpha < 1$, it is not clear how to define $W_t^{(\alpha)}u$ for $u\in S'$ since $\xi\mapsto e^{-t|\xi|^{2\alpha}}$ is not smooth at the origin, hence it is no multiplier on $S$. If we can restrict ourselves, however, to $u\in S$ or $u\in L_p$, $W_t^{(\alpha)}$ is well defined, as it is a convolution semigroup on all spaces $L_p$, $1\leq p\leq\infty$.

\begin{theorem}\label{est-05}
    Denote by $(W^{(\alpha)}_t)_{t\geq 0}$, $W_t^{(\alpha)} = e^{-t(-\Delta)^\alpha}$ the `fractional' heat semigroup of order $\alpha\in(0,1)$ generated by the fractional Laplace operator $-(-\Delta)^\alpha$. Let $s\in\real$ and $1\leq p,q < \infty$.

    With the constant $c>0$ from \eqref{est-e10} one has for all $d\geq 0$, $t>0$ and $u\in A_{p,q}^s$
    \begin{equation}\label{est-e12}
        \nnorm{W_t^{(\alpha)} u\mid A_{p,q}^{s+d}}
        \leq c \left(t^{-d/(2\alpha)} \frac{\Gamma\left(1+d/(2\alpha)\right)}{\Gamma\left(1+d/2\right)} +  1\right) \nnorm{u \mid A_{p,q}^s}.
    \end{equation}
    In particular, the fractional counterpart of \eqref{est-e10} holds for some constant $c' = c'_{p,q,s,\alpha}$
    \begin{equation}\label{est-e14}
        \nnorm{W_t^{(\alpha)} u \mid A_{p,q}^{s+d}}
        \leq c't^{-d/(2\alpha)} \nnorm{u \mid A_{p,q}^s},\quad 0< t\leq 1.
    \end{equation}
\end{theorem}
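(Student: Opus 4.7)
The plan is a three-step argument: apply Bochner's subordination, bound the Gauss--Weierstrass semigroup pointwise in $r$, and evaluate the resulting negative moment of the $\alpha$-stable subordinator.

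\medskip
First, Bochner's representation in Lemma~\ref{sub-07}, combined with the commutation $\phi_k(D) W_r = W_r \phi_k(D)$ (both are Fourier multipliers), gives
\begin{gather*}
    \phi_k(D) W_t^{(\alpha)} u
    = \int_0^\infty W_r \phi_k(D) u\, \mu_t^{(\alpha)}(dr).
\end{gather*}
Since $1\leq p,q<\infty$, the mixed-norm spaces $L_p(\ell_q)$ and $\ell_q(L_p)$ underlying Definition~\ref{fun-3} are genuine Banach spaces, and Minkowski's integral inequality lets us pull the $A_{p,q}^{s+d}$-norm inside the $\mu_t^{(\alpha)}$-integral, yielding
\begin{gather*}
    \nnorm{W_t^{(\alpha)} u \mid A_{p,q}^{s+d}}
    \leq \int_0^\infty \nnorm{W_r u \mid A_{p,q}^{s+d}}\, \mu_t^{(\alpha)}(dr).
\end{gather*}

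\medskip
Next, I would estimate the integrand by combining \eqref{est-e10} with the contractivity provided by Theorem~\ref{contr}. For $0<r\leq 1$, \eqref{est-e10} directly gives $\nnorm{W_r u \mid A_{p,q}^{s+d}}\leq c r^{-d/2}\nnorm{u \mid A_{p,q}^s}$. For $r>1$, I would write $W_r = W_{r-1}\circ W_1$, apply the $A_{p,q}^{s+d}$-contractivity of $W_{r-1}$ from Theorem~\ref{contr}, and then apply \eqref{est-e10} at $r=1$ to get the $r$-independent bound $c\nnorm{u \mid A_{p,q}^s}$. Splitting the integral at $r=1$ and enlarging the domain of the small-$r$ integrand by positivity, we arrive at
\begin{gather*}
    \nnorm{W_t^{(\alpha)} u \mid A_{p,q}^{s+d}}
    \leq c\left(\int_0^\infty r^{-d/2}\, \mu_t^{(\alpha)}(dr) + \mu_t^{(\alpha)}((1,\infty))\right)\nnorm{u \mid A_{p,q}^s},
\end{gather*}
and the second term is at most $1$ since $\mu_t^{(\alpha)}$ is a probability measure.

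\medskip
The main computation is the negative moment of the $\alpha$-stable subordinator. The trick I would use is the Gamma representation $r^{-d/2} = \Gamma(d/2)^{-1}\int_0^\infty \lambda^{d/2-1} e^{-\lambda r}\, d\lambda$ (valid for $d>0$). Fubini combined with the Laplace transform identity $\int_0^\infty e^{-\lambda r}\,\mu_t^{(\alpha)}(dr) = e^{-t\lambda^\alpha}$ from Theorem~\ref{sub-03} gives
\begin{gather*}
    \int_0^\infty r^{-d/2}\, \mu_t^{(\alpha)}(dr)
    = \frac{1}{\Gamma(d/2)}\int_0^\infty \lambda^{d/2-1} e^{-t\lambda^\alpha}\, d\lambda
    = \frac{t^{-d/(2\alpha)}}{\alpha\, \Gamma(d/2)}\,\Gamma\!\left(\frac{d}{2\alpha}\right),
\end{gather*}
after the substitution $u=t\lambda^\alpha$. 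The functional equation $\Gamma(1+x) = x\Gamma(x)$ rewrites the prefactor as $\Gamma(1+d/(2\alpha))/\Gamma(1+d/2)$, which is exactly the constant appearing in \eqref{est-e12}; the edge case $d=0$ needs no separate argument since it reduces to the contraction Theorem~\ref{contr}. Finally, \eqref{est-e14} follows from \eqref{est-e12} by noting that $t^{-d/(2\alpha)}\geq 1$ whenever $0<t\leq 1$, so the additive $1$ inside the bracket can be absorbed into the leading power. The only mildly delicate point I anticipate is the rigorous justification of Minkowski and Fubini in the mixed-norm setting, which is standard for $1\leq p,q<\infty$ once one observes that $r\mapsto W_r u$ is strongly continuous (hence measurable) into each $A_{p,q}^{s+d}$ and that the upper bound derived above is finite.
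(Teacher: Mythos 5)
Your argument is essentially the paper's own proof: the same subordination formula \eqref{sub-e06}, the same splitting of the $\mu_t^{(\alpha)}$-integral at $r=1$ with \eqref{est-e10} on $(0,1]$ and the factorization $W_r=W_1W_{r-1}$ plus the contractivity of Theorem~\ref{contr} on $(1,\infty)$ (the order in which you apply contraction and smoothing in the second term differs from the paper's, but the operators commute, so this is immaterial), and the same bound $\mu_t^{(\alpha)}((1,\infty))\leq 1$. Your inline computation of $\int_0^\infty r^{-d/2}\,\mu_t^{(\alpha)}(dr)$ via the Gamma representation, Fubini and the Laplace transform $e^{-t\lambda^\alpha}$ is precisely the proof of Lemma~\ref{app-moment} in the appendix, which the paper simply cites at this point; your handling of $d=0$ and the passage from \eqref{est-e12} to \eqref{est-e14} also match.

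The one step you skip is the reduction to $u\in S$. The paper opens its proof with exactly this: since $p,q<\infty$, the Schwartz functions are dense in $A_{p,q}^s$, and it suffices to prove the estimate for $u\in S$. This is not cosmetic. For $s\leq 0$ a general $u\in A_{p,q}^s$ is merely a tempered distribution, and $W_t^{(\alpha)}u$ is not defined on $S'$ because $\xi\mapsto e^{-t|\xi|^{2\alpha}}$ fails to be smooth at the origin and hence is not a multiplier on $S'$ (this is emphasized in the paragraph preceding the theorem). Your appeal to Bochner's representation $\phi_k(D)W_t^{(\alpha)}u=\int_0^\infty W_r\phi_k(D)u\,\mu_t^{(\alpha)}(dr)$, and your justification of Minkowski via strong continuity of $r\mapsto W_ru$ in $A_{p,q}^{s+d}$, both tacitly presuppose that $u$ is a function on which the convolution semigroups act, which is exactly what the density reduction provides; it is also the only place where the standing hypothesis $p,q<\infty$ is used beyond making the mixed-norm spaces Banach. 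Adding this reduction at the start closes the gap and makes your proof coincide with the paper's.
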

\begin{proof}
    Since $p,q<\infty$, the Schwartz functions $S$ are dense in $A_{p,q}^s$. This means that we have to prove \eqref{est-e14} only for $u\in S$.
    Using Bochner's subordination we can write
    \begin{gather*}
        W^{(\alpha)}_t u(x) = \int_0^\infty W_r u(x)\,\mu^{(\alpha)}_t(dr),\quad t>0.
    \end{gather*}
    Since the measures $\mu^{(\alpha)}_t(dr)$ are probability measures, we can use the vector-valued triangle inequality for the norm $\|u\mid A_{p,q}^s\|$ to deduce
    \begin{align*}
        \nnorm{W^{(\alpha)}_t u \mid A_{p,q}^{s+d}}
        &\leq \int_0^1 \nnorm{W_r u \mid A_{p,q}^{s+d}}\,\mu^{(\alpha)}_t(dr)
        + \int_1^\infty \nnorm{W_r u \mid A_{p,q}^{s+d}}\,\mu^{(\alpha)}_t(dr)\\
        &= \int_0^1 \nnorm{W_r u \mid A_{p,q}^{s+d}}\,\mu^{(\alpha)}_t(dr)
        + \int_1^\infty \nnorm{W_1 W_{r-1} u \mid A_{p,q}^{s+d}}\,\mu^{(\alpha)}_t(dr).
    \intertext{Using first \eqref{est-e10} for both terms (with $t=1$ in the second term), and then Theorem~\ref{contr} for the second term, yields}
        \nnorm{W^{(\alpha)}_t u \mid A_{p,q}^{s+d}}
        &\leq c\int_0^1  r^{ -d/2}\,\mu^{(\alpha)}_t(dr) \cdot \nnorm{u \mid A_{p,q}^{s}}
        + c\int_1^\infty \nnorm{W_{r-1} u \mid A_{p,q}^{s}}\,\mu^{(\alpha)}_t(dr)\\
        &\leq c\int_0^1 r^{ -d/2}\,\mu^{(\alpha)}_t(dr) \cdot \nnorm{u \mid A_{p,q}^{s}}
        + c\int_1^\infty \mu^{(\alpha)}_t(dr) \cdot \nnorm{u \mid A_{p,q}^{s}}.
    \end{align*}
    In order to estimate the integral expressions we recall that $\mu_t^{(\alpha)}(dr)$ is the transition semigroup of an $\alpha$-stable subordinator $(S^{(\alpha)}_t)_{t\geq 0}$. Therefore,
    \begin{gather*}
        \int_0^1 r^{ -d/2}\,\mu^{(\alpha)}_t(dr)
        \leq \int_0^\infty r^{ -d/2}\,\mu^{(\alpha)}_t(dr)
        = \Ee \left[\big(S_t^{(\alpha)}\big)^{ -d/2}\right]
        = t^{-d/(2\alpha)} \frac{\Gamma\left(1+d/(2\alpha)\right)}
        {\Gamma\left(1+ d/2\right)},
    \end{gather*}
    see Lemma~\ref{app-moment} in the appendix.  Since
    \begin{gather*}
        \int_1^\infty \mu^{(\alpha)}_t(dr)
        = \Pp(S_t^{(\alpha)} > 1)
        \leq 1,
    \end{gather*}
    we get \eqref{est-e12}; the estimate \eqref{est-e14} is now obvious.
\end{proof}

Observe that $A_{p,q}^s\subset L_p$ if $s>0$. If we use in the proof of Theorem~\ref{est-05} $u\in L_p$, $1\leq p\leq \infty$, instead of $u\in S$, we immediately get the following result.

\begin{corollary}\label{est-07}
    Denote by $(W^{(\alpha)}_t)_{t\geq 0}$, $W_t^{(\alpha)} = e^{-t(-\Delta)^\alpha}$ the `fractional' heat semigroup of order $\alpha\in(0,1)$ generated by the fractional Laplace operator $-(-\Delta)^\alpha$. The estimates \eqref{est-e12} and \eqref{est-e14} of Theorem~\ref{est-05} remain valid if $s> 0$ and $1\leq p,q\leq\infty$.
\end{corollary}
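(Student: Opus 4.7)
The plan is to rerun the proof of Theorem~\ref{est-05} verbatim, with the one step that required $p,q<\infty$ replaced by a different justification valid in the full range $1\leq p,q\leq\infty$. In Theorem~\ref{est-05} the restriction $p,q<\infty$ was used solely to reduce to $u\in S$ by density, so that the Bochner-subordination representation
\begin{gather*}
    W^{(\alpha)}_t u(x) = \int_0^\infty W_r u(x)\,\mu^{(\alpha)}_t(dr), \quad t>0,
\end{gather*}
from Lemma~\ref{sub-07} could be applied. Under the hypothesis $s>0$, however, the standard embedding $A_{p,q}^s\hookrightarrow L_p$ already gives $u\in L_p$ for every $u\in A_{p,q}^s$, and $W_t^{(\alpha)}$ is a well-defined positivity-preserving convolution semigroup on $L_p$ for every $1\leq p\leq\infty$. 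Hence the above integral representation is available for $u\in A_{p,q}^s$ directly, without density.

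Next I would check that every other ingredient of the proof of Theorem~\ref{est-05} survives at the endpoints $p=\infty$ and $q=\infty$. The vector-valued Minkowski inequality is legitimate because $\nnorm{\cdot\mid A_{p,q}^{s+d}}$ is a genuine norm once $p,q\geq 1$; Theorem~\ref{contr} covers $1\leq p\leq\infty$, $0<q\leq\infty$ in the $B$-scale and $1\leq p,q\leq\infty$ in the $F$-scale; and the caloric smoothing \eqref{est-e10} is valid across the same range. In particular, for $0<r\leq 1$ estimate \eqref{est-e10} places $W_r u$ in $A_{p,q}^{s+d}$, and for $r>1$ the factorization $W_r=W_1\circ W_{r-1}$ combined with Theorem~\ref{contr} and \eqref{est-e10} does the same.

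I would then repeat the splitting $\int_0^\infty = \int_0^1 + \int_1^\infty$ exactly as in Theorem~\ref{est-05}: for $r\leq 1$ apply \eqref{est-e10} to $W_r u$; for $r>1$ write $W_r u = W_1 W_{r-1} u$, apply \eqref{est-e10} with $t=1$, and then bound $W_{r-1}u$ by $u$ in $A_{p,q}^s$ via Theorem~\ref{contr}. The resulting two integrals
\begin{gather*}
    \int_0^1 r^{-d/2}\,\mu_t^{(\alpha)}(dr)\leq \Ee\left[(S_t^{(\alpha)})^{-d/2}\right] = t^{-d/(2\alpha)}\frac{\Gamma\left(1+d/(2\alpha)\right)}{\Gamma\left(1+d/2\right)}, \qquad \int_1^\infty \mu_t^{(\alpha)}(dr)\leq 1,
\end{gather*}
are handled as before via Lemma~\ref{app-moment}, yielding \eqref{est-e12} and hence, absorbing the second summand for $0<t\leq 1$, \eqref{est-e14}.

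The only (mild) obstacle I anticipate is bookkeeping at the endpoints: verifying carefully that the Bochner representation, the vector-valued Minkowski inequality, the contractivity of $W_{r-1}$, and the finite-time smoothing \eqref{est-e10} are all cited in ranges that include $p=\infty$ or $q=\infty$. No new estimate is needed; the whole statement is a genuine corollary obtained by swapping ``$S$ dense'' for ``$A_{p,q}^s\hookrightarrow L_p$ when $s>0$''.
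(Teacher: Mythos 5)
Your proposal is correct and is essentially the paper's own argument: the authors likewise observe that $A_{p,q}^s\subset L_p$ for $s>0$, so the subordination formula of Lemma~\ref{sub-07} applies to $u\in L_p$ directly and the proof of Theorem~\ref{est-05} can be rerun without the density reduction to $S$. Your extra bookkeeping that Theorem~\ref{contr} and \eqref{est-e10} cover the endpoint cases $p=\infty$ or $q=\infty$ is exactly what makes the ``immediately'' in the paper legitimate.
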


In order to treat the remaining cases $A_{p,q}^s$ where $s \leq 0$ and $\max\{p,q\}=\infty$ we use a lifting trick;  we are grateful to H.\ Triebel for pointing this out to us (private communication), see also the discussion in \cite[p.~104]{T20}.  Recall that the lifting operator $(1-\Delta)^{r/2}$ is a bijection between $A_{p,q}^s$ and $A_{p,q}^{s-r}$ for all $0<p,q\leq\infty$ and $s\in\real$. On the Schwartz space $S$ the lifiting operator and $W_t^{(\alpha)}$ commute,
\begin{gather*}
    W_t^{(\alpha)}u =(1-\Delta)^{-r/2} W_t^{(\alpha)} (1-\Delta)^{r/2} u\quad\text{for all $u\in S$}.
\end{gather*}
Let $s\in\real$ and pick $r$ with $s>r$. The operator $\overline W_t^{(\alpha)} := (1-\Delta)^{-r/2} W_t^{(\alpha)} (1-\Delta)^{r/2}$ is well-defined on $A_{p,q}^s$, extends $W_t^{(\alpha)}$ and makes the following diagram commutative:
\begin{gather*}
\begin{array}{ccc}
    A_{p,q}^s &\xrightarrow[\qquad\qquad\qquad\qquad\qquad]{(1-\Delta)^{r/2}} & A_{p,q}^{s-r}\\
    \llap{\text{\tiny $\overline W_t^{(\alpha)}$}}\Bigg\downarrow & &\Bigg\downarrow\rlap{\text{\tiny $W_t^{(\alpha)}$}}\\
    A_{p,q}^{s+d} &\xleftarrow[\qquad\qquad\qquad\qquad\qquad]{(1-\Delta)^{-r/2}} & A_{p,q}^{s-r+d}
\end{array}
\end{gather*}
It is not hard to see that, for any fixed  $s\in\real$, the extension $\overline W_t^{(\alpha)}$ onto $A_{p,q}^s$  does not depend on $r<s$, i.e.\ we may understand $\overline W_t^{(\alpha)}$ as an operator on $A_{p,q}^{-\infty}:=\bigcup_{s\in\real} A_{p,q}^s$. Together with the previous considerations we get

\begin{corollary}\label{est-09}
    Denote by $(\overline W^{(\alpha)}_t)_{t\geq 0}$ the `extension by lifting' of the fractional heat semigroup $W_t^{(\alpha)} = e^{-t(-\Delta)^\alpha}$ of order $\alpha\in(0,1)$. The estimates \eqref{est-e12} and \eqref{est-e14} of Theorem~\ref{est-05} remain valid for $\overline W_t^{(\alpha)}$ for all $s\in\real$ and $1\leq p,q\leq\infty$.

    If $s>0$ and $1\leq p,q\leq\infty$ or $s\in\real$ and $1\leq p,q<\infty$, these estimates are true for the original semigroup operators $W_t^{(\alpha)}$.
\end{corollary}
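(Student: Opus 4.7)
The plan is to reduce Corollary~\ref{est-09} to Theorem~\ref{est-05} and Corollary~\ref{est-07} via the lifting diagram set up in the paragraph preceding the statement.

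The second half of the corollary (concerning $W_t^{(\alpha)}$ itself) requires no new argument: the case $s\in\real$, $1\leq p,q<\infty$ is exactly Theorem~\ref{est-05}, and the case $s>0$, $1\leq p,q\leq\infty$ is Corollary~\ref{est-07}. Together these cover the stated range, so this part of the proof is essentially a remark.

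The first half (the extended operator $\overline W_t^{(\alpha)}$ for arbitrary $s\in\real$ and $1\leq p,q\leq\infty$) I would handle by conjugation. Given $u\in A_{p,q}^s$, I would pick any $r<s$ with $s-r>0$, set $v:=(1-\Delta)^{r/2}u\in A_{p,q}^{s-r}$, and apply Corollary~\ref{est-07} to $v$ (its hypotheses $s-r>0$ and $1\leq p,q\leq\infty$ are satisfied), getting
\begin{gather*}
    \|W_t^{(\alpha)}v\mid A_{p,q}^{s-r+d}\|
    \leq c\bigl(t^{-d/(2\alpha)}\tfrac{\Gamma(1+d/(2\alpha))}{\Gamma(1+d/2)}+1\bigr)\|v\mid A_{p,q}^{s-r}\|.
\end{gather*}
Since $\overline W_t^{(\alpha)}u=(1-\Delta)^{-r/2}W_t^{(\alpha)}v$ by the commutative diagram, and since $(1-\Delta)^{r/2}$ is an isomorphism $A_{p,q}^s\to A_{p,q}^{s-r}$ while $(1-\Delta)^{-r/2}$ is an isomorphism $A_{p,q}^{s-r+d}\to A_{p,q}^{s+d}$, the norms on both ends are equivalent; absorbing the (finite) equivalence constants into the prefactor produces \eqref{est-e12}, and hence also \eqref{est-e14}, for $\overline W_t^{(\alpha)}$ on $A_{p,q}^s$ (with a constant now depending also on $r$, $s$, $p$, $q$).

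The main point that still requires genuine verification is the claim, flagged but not proved in the text, that the extension $\overline W_t^{(\alpha)}$ does not depend on the choice of $r<s$; I expect this to be the only mildly delicate step. The plan is to compare two candidate extensions $\overline W_t^{(\alpha),r_1}$ and $\overline W_t^{(\alpha),r_2}$: on the Schwartz class $S\subset A_{p,q}^s$ they agree because the lifting operator and the Fourier multiplier $e^{-t|\xi|^{2\alpha}}$ both act as Fourier multipliers and hence commute. For $\max\{p,q\}<\infty$ this equality extends to all of $A_{p,q}^s$ by density of $S$ and continuity of both operators into $A_{p,q}^{s+d}$. For $\max\{p,q\}=\infty$ density fails, but both candidates are realized as the \emph{same} Fourier multiplier on $S'$ precomposed and postcomposed with lifting operators that cancel, so agreement holds pointwise on $A_{p,q}^s\subset S'$ and the choice of $r$ is merely bookkeeping; the operator $\overline W_t^{(\alpha)}$ is then unambiguously defined on $A_{p,q}^{-\infty}=\bigcup_{s\in\real}A_{p,q}^s$, which completes the proof.
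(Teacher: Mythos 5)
Your argument is essentially the paper's: the second half of the corollary is just Theorem~\ref{est-05} together with Corollary~\ref{est-07}, and the first half is obtained, exactly as in the discussion preceding the statement, by conjugating with the lifts $(1-\Delta)^{\pm r/2}$ (choosing $r<s$ so that $s-r>0$), applying Corollary~\ref{est-07} in the shifted scale, and absorbing the equivalence constants of the lifting isomorphisms. That part is fine, and your remark that the constant then also depends on $r,s,p,q$ is if anything more careful than the paper's wording.

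The one step you should not leave as stated is the independence of $\overline W_t^{(\alpha)}$ from the choice of $r$ in the case $\max\{p,q\}=\infty$. Your justification --- that both candidate extensions ``are realized as the same Fourier multiplier on $S'$'' --- is precisely what fails here: $\xi\mapsto e^{-t|\xi|^{2\alpha}}$ is not smooth at the origin, hence not a multiplier on $S$ or $S'$, and this is the very reason the extension by lifting is introduced at all; so there is no common $S'$-multiplier realization to appeal to, and density of $S$ is unavailable when $p$ or $q$ is infinite. A correct and elementary route: for $r_1<r_2<s$ put $\rho:=r_2-r_1>0$ and use that $(1-\Delta)^{-\rho/2}$ is convolution with a Bessel kernel $G_\rho\in L_1$, so that for every $v\in L_p$ one has, by Fubini for the convolutions of the $L_1$ kernels $G_\rho$ and $g_t^{(\alpha)}$,
\begin{gather*}
    (1-\Delta)^{-\rho/2}\bigl(g_t^{(\alpha)}*v\bigr)=g_t^{(\alpha)}*\bigl((1-\Delta)^{-\rho/2}v\bigr).
\end{gather*}
Applying this with $v=(1-\Delta)^{r_2/2}u\in A_{p,q}^{s-r_2}\subset L_p$ and writing $(1-\Delta)^{r_1/2}u=(1-\Delta)^{-\rho/2}v$ shows directly that $(1-\Delta)^{-r_1/2}W_t^{(\alpha)}(1-\Delta)^{r_1/2}u=(1-\Delta)^{-r_2/2}W_t^{(\alpha)}(1-\Delta)^{r_2/2}u$ for all $u\in A_{p,q}^s$, with no density argument needed; the paper itself only asserts this independence (``it is not hard to see'') as part of the setup, so with this repair your proof matches the intended one.
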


\section{Two extensions of the subordination technique}\label{ext}

The subordination technique which we have developed in the previous Section~\ref{est} can be extended into two directions:
(i) We may give up the concept of fractional powers in favour of general Bernstein functions, or
(ii) we may look at higher-order `fractional' semigroups $W_t^{(\beta)}$ where $\beta > 0$.

\medskip
The extension from fractional powers $\lambda\mapsto \lambda^\alpha$ to arbitrary Bernstein functions $\lambda\mapsto f(\lambda)$, see Section~\ref{sub}, is straightforward. Using general subordinate semigroups $(W_t^f)_{t\geq 0}$ instead of the fractional heat semigroup $(W_t^{(\alpha)})_{t\geq 0}$, the arguments of Section~\ref{est} go through almost literally. As before, $\overline W_t^f$ denotes the `extension by lifting' of $W_t^f$. Note that $\overline W_t^f = W_t^f$ if $\xi\mapsto f(|\xi|^2)$ is smooth at the origin. Typical examples are the `relativistic' semigroups of the form $f(\lambda) = (\lambda+1)^\alpha - 1$ for $0<\alpha < 1$.

\begin{theorem}\label{ext-07}
    Let $(W_t)_{t\geq 0}$ be as in Lemma~\ref{est-05}, let $f$ be a Bernstein function,  $(S_t^f)_{t\geq 0}$ the corresponding subordinator, and denote by  $(\overline W_t^f)_{t\geq 0}$ the subordinate semigroup extended by lifting.  For the constant $c=c_{p,q,s}$ appearing in \eqref{est-e10} and $s\in\real$, $1\leq p,q\leq\infty$ and $d\geq 0$ we have
    \begin{equation}\label{ext-e02}
        \nnorm{\overline W_t^{f} u\mid A_{p,q}^{s+d}}
        \leq c \left(\Ee \left[\big(S_t^f)^{-d/2}\right] + \Pp(S_t^f > 1)\right) \nnorm{u \mid A_{p,q}^s},\quad t > 0.
    \end{equation}
    In particular, there exists some constant $c' = c'_{p,q,s,f}$ such that for $d>0$
    \begin{equation}\label{ext-e04}
        \nnorm{\overline W_t^{f} u \mid A_{p,q}^{s+d}}
        \leq c' \Ee \left[\big(S_t^f)^{-d/2}\right] \nnorm{u \mid A_{p,q}^s},\quad 0< t\leq 1.
    \end{equation}
    If $s\geq 0$ and $1\leq p,q\leq \infty$ or $s\in\real$ and $1\leq p,q < \infty$, these estimates remain valid for the non-extended semigroup $(W_t^f)_{t\geq 0}$.
\end{theorem}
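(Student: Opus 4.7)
The plan is to mirror the proof of Theorem~\ref{est-05} essentially verbatim, replacing the $\alpha$-stable subordinator $(S_t^{(\alpha)})_{t\geq 0}$ by the general subordinator $(S_t^f)_{t\geq 0}$ associated with $f$ and keeping the moments implicit instead of evaluating them as Gamma-quotients. By Lemma~\ref{sub-07} we have the Bochner representation $W_t^f u = \int_0^\infty W_r u\,\mu_t^f(dr)$, which allows us to reduce everything to contractivity and smoothing estimates for the Gau{\ss}--Weierstra{\ss} semigroup $(W_t)_{t\geq 0}$.

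The main estimate proceeds as follows. Assume first that either $s\geq 0$, $1\leq p,q\leq\infty$ or $s\in\real$, $1\leq p,q<\infty$, so that $W_t^f u$ is meaningful either on $L_p$ (by convolution) or on $S$ (which is dense in $A_{p,q}^s$). Since $\mu_t^f$ is a probability measure and $A_{p,q}^s$ is a Banach space (recall $p,q\geq 1$), Minkowski's integral inequality gives
\begin{align*}
    \nnorm{W_t^f u\mid A_{p,q}^{s+d}}
    &\leq \int_0^1 \nnorm{W_r u\mid A_{p,q}^{s+d}}\,\mu_t^f(dr)
    + \int_1^\infty \nnorm{W_1 W_{r-1} u\mid A_{p,q}^{s+d}}\,\mu_t^f(dr).
\end{align*}
On $[0,1]$ I apply the classical caloric smoothing \eqref{est-e10}, which gives $\nnorm{W_r u\mid A_{p,q}^{s+d}}\leq c r^{-d/2}\nnorm{u\mid A_{p,q}^s}$. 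On $[1,\infty)$ I apply \eqref{est-e10} to $W_1$ (bringing out the constant $c$) and then Theorem~\ref{contr} to $W_{r-1}$ (this is exactly why $1\leq p,q\leq\infty$ is needed for both scales). Bounding the resulting integrals by
\begin{gather*}
    \int_0^1 r^{-d/2}\,\mu_t^f(dr)\leq \Ee\left[(S_t^f)^{-d/2}\right]
    \quad\text{and}\quad
    \int_1^\infty \mu_t^f(dr) = \Pp(S_t^f > 1),
\end{gather*}
yields \eqref{ext-e02}. To pass to the extended semigroup $\overline W_t^f$ for general $s\in\real$ and $1\leq p,q\leq\infty$, I invoke the commutative diagram introduced before Corollary~\ref{est-09}: the lifting $(1-\Delta)^{\pm r/2}$ is an isomorphism between $A_{p,q}^{\sigma}$ and $A_{p,q}^{\sigma-r}$ for all $\sigma$, commutes with $W_t^f$ on $S$, and the indices $(s-r,s+d-r)$ can be chosen to fall in the `good' range $s-r>0$, so \eqref{ext-e02} transfers directly to $\overline W_t^f$.

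To deduce \eqref{ext-e04} from \eqref{ext-e02}, the remaining task is to absorb $\Pp(S_t^f>1)$ into $\Ee[(S_t^f)^{-d/2}]$ uniformly for $0<t\leq 1$ and $d>0$. The key point is that $\Ee[(S_t^f)^{-d/2}]$ is bounded below by a positive constant on $(0,1]$: by Markov and the fact that subordinators are monotone in $t$, we pick $R$ so large that $\Pp(S_1^f\leq R)\geq 1/2$, and since $S_t^f$ is stochastically dominated by $S_1^f$ for $0<t\leq 1$, we get $\Ee[(S_t^f)^{-d/2}]\geq R^{-d/2}\Pp(S_t^f\leq R)\geq \tfrac 12 R^{-d/2}$. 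Combined with $\Pp(S_t^f>1)\leq 1$, this yields \eqref{ext-e04} with $c'=c(1+2R^{d/2})$.

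The main technical obstacle, relative to the stable case of Theorem~\ref{est-05}, is precisely this absorption step: one has no closed-form evaluation of the moment $\Ee[(S_t^f)^{-d/2}]$ for a general Bernstein $f$ (nothing like $t^{-d/(2\alpha)}\Gamma(1+d/(2\alpha))/\Gamma(1+d/2)$ is available), and one instead has to rely on the soft monotonicity and Markov arguments above to see that the moment term genuinely dominates on the range $0<t\leq 1$.
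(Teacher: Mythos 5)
Your proposal is correct and takes essentially the same route as the paper: Bochner's representation, splitting the subordination integral at $r=1$, applying \eqref{est-e10} on $(0,1]$ and \eqref{est-e10} (with $t=1$) plus Theorem~\ref{contr} on $[1,\infty)$, and then the lifting trick for the endpoint cases $s\in\real$, $\max(p,q)=\infty$. Your absorption of $\Pp(S_t^f>1)$ via the explicit uniform lower bound $\Ee\left[\big(S_t^f\big)^{-d/2}\right]\geq \tfrac12 R^{-d/2}$ on $(0,1]$ (truncation at a quantile $R$ of $S_1^f$ plus pathwise monotonicity) is just a slightly more explicit version of the paper's one-line observation that this moment is bounded away from $0$ on $(0,1]$ (it tends to $\infty$ as $t\to0$ and is monotone in $t$), so the arguments agree.
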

\begin{proof}
    The estimate \eqref{ext-e02} follows just as in the proof of Lemma~\ref{est-05}. In order to see \eqref{ext-e04} observe that by monotone convergence and the fact that $S_0^f=0$
    \begin{gather*}
        \lim_{t\to 0} \Ee \left[\big(S_t^f)^{-d/2}\right] = \infty
        \quad\text{and, trivially,}\quad
        \Pp(S_t^f > 1)\leq 1.
    \qedhere
    \end{gather*}
\end{proof}

Using Lemma~\ref{app-asymp} we can control the growth of the expectation appearing in \eqref{ext-e04}.

\begin{corollary}\label{ext-09}
    If, in the setting of Corollary~\ref{ext-07}, the Bernstein function $f$ satisfies $\liminf\limits_{\lambda\to 0} f(2\lambda)/f(\lambda)>1$, there is some constant $C' = C'_{p,q,s,f}$ such that
    \begin{equation}\label{ext-e06}
        [f^{-1}(1/t)]^{-d/2}\cdot \nnorm{\overline W_t^{f} u \mid A_{p,q}^{s+d}}
        \leq C' \nnorm{u \mid A_{p,q}^s},\quad 0< t\leq 1.
    \end{equation}
\end{corollary}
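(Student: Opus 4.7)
The plan is essentially assembly: combine the moment estimate \eqref{ext-e04} of Theorem~\ref{ext-07} with the small-time asymptotics of the negative moments $\Ee[(S_t^f)^{-d/2}]$ provided by Lemma~\ref{app-asymp}. The inequality \eqref{ext-e04} already gives
\begin{equation*}
\nnorm{\overline W_t^{f} u \mid A_{p,q}^{s+d}}
\leq c'_{p,q,s,f} \, \Ee\bigl[(S_t^f)^{-d/2}\bigr] \, \nnorm{u \mid A_{p,q}^s}, \quad 0 < t \leq 1, \; d > 0,
\end{equation*}
so everything reduces to establishing $\Ee\bigl[(S_t^f)^{-d/2}\bigr] \leq C_{f,d}\,[f^{-1}(1/t)]^{d/2}$ uniformly on $(0,1]$, which is the content of Lemma~\ref{app-asymp}. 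Multiplying the resulting inequality through by $[f^{-1}(1/t)]^{-d/2}$ then yields \eqref{ext-e06} with $C' = c'_{p,q,s,f} \cdot C_{f,d}$. The case $d=0$ is merely the contractivity contained in Theorem~\ref{ext-07}.

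The only real work lies in the negative-moment bound of Lemma~\ref{app-asymp}. The standard route is to apply Markov's inequality to the positive random variable $e^{-S_t^f/r}$:
\begin{equation*}
\Pp(S_t^f < r)
= \Pp\bigl(e^{-S_t^f/r} > e^{-1}\bigr)
\leq e \cdot \Ee\bigl[e^{-S_t^f/r}\bigr]
= e \cdot e^{-t f(1/r)},
\end{equation*}
and then feed this into the layer-cake identity $\Ee[(S_t^f)^{-d/2}] = (d/2)\int_0^\infty r^{-d/2-1} \Pp(S_t^f < r)\,dr$. After the substitution $\lambda = 1/r$ the problem reduces to estimating $\int_0^\infty \lambda^{d/2-1} e^{-t f(\lambda)}\,d\lambda$, which is naturally split at $\lambda_0 := f^{-1}(1/t)$; the piece over $(0,\lambda_0)$ is at most $(2/d)\lambda_0^{d/2}$ by simply dropping the exponential factor.

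The main obstacle is the tail $\int_{\lambda_0}^\infty \lambda^{d/2-1} e^{-t f(\lambda)}\,d\lambda$. After rescaling $\lambda = \lambda_0 s$ this becomes $\lambda_0^{d/2}\int_1^\infty s^{d/2-1} e^{-f(\lambda_0 s)/f(\lambda_0)}\,ds$, and here the doubling hypothesis $\liminf_{\lambda\to 0} f(2\lambda)/f(\lambda) > 1$ enters: via iteration it yields a uniform power-type lower scaling $f(\lambda_0 s)/f(\lambda_0) \gtrsim s^\gamma$ for some $\gamma > 0$, forcing exponential decay of the integrand and hence a bound on the $s$-integral that is uniform in $\lambda_0$. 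Combining head and tail yields $\Ee\bigl[(S_t^f)^{-d/2}\bigr] \leq C_{f,d}\,[f^{-1}(1/t)]^{d/2}$, and plugging this into \eqref{ext-e04} completes the proof of \eqref{ext-e06}.
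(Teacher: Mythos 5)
Your top-level argument is exactly the paper's: Corollary~\ref{ext-09} is obtained by inserting the upper bound of Lemma~\ref{app-asymp} into \eqref{ext-e04} (with $d=0$ handled directly by \eqref{ext-e02}), and nothing more is needed. Where you diverge is that you re-prove the moment bound of Lemma~\ref{app-asymp} rather than citing it, and your route is genuinely different from the appendix proof: you use Markov's inequality $\Pp(S_t^f<r)\le e\,e^{-tf(1/r)}$ together with the layer-cake formula, whereas the paper uses the exact identity $\Ee\bigl[(S_t^f)^{-r}\bigr]=\frac1{\Gamma(r)}\int_0^\infty e^{-tf(x)}x^{r-1}\,dx$ (Gamma representation of $\lambda^{-r}$ plus Tonelli) followed by a dyadic decomposition in the variable $y=f(x)$ and the doubling property of $f^{-1}$ at infinity. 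Both arguments reduce to the same Laplace-type integral $\int_0^\infty\lambda^{d/2-1}e^{-tf(\lambda)}\,d\lambda$; yours loses only a harmless factor $e$ and replaces the dyadic splitting by a single cut at $\lambda_0=f^{-1}(1/t)$ plus a power-type lower scaling of $f$.

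One point needs care: in your tail estimate the doubling must be applied \emph{at infinity}, not at the origin. Since $0<t\le1$, the cut point $\lambda_0=f^{-1}(1/t)$ lies in $[f^{-1}(1),\infty)$ and tends to $\infty$ as $t\to0$, so the scaling $f(\lambda_0 s)\ge c\,s^\gamma f(\lambda_0)$ for $s\ge1$, uniformly in large $\lambda_0$, follows by iteration from $\liminf_{\lambda\to\infty}f(2\lambda)/f(\lambda)>1$ (equivalently, from the hypotheses actually stated in Lemma~\ref{app-asymp}: $f(\infty)=\infty$ and $\limsup_{t\to\infty}f^{-1}(2t)/f^{-1}(t)<\infty$), but not from the literal condition $\liminf_{\lambda\to0}f(2\lambda)/f(\lambda)>1$ that you quote: a condition at $0$ gives no control on $[\lambda_0,\infty)$. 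For instance $f(\lambda)=\log(1+\lambda)$ satisfies the condition at $0$, yet the Gamma subordinator has $\Ee\bigl[(S_t^f)^{-d/2}\bigr]=\infty$ for $t<d/2$, so no bound of the desired form can hold there. You inherit this from the corollary's own wording and the footnote to Lemma~\ref{app-asymp}, where the equivalence is presumably meant with $\lambda\to\infty$; under that reading your iteration is correct, provided you also dispose of the intermediate range of $t$ (where $\lambda_0$ lies below the threshold at which the doubling kicks in), e.g.\ by monotonicity of $t\mapsto\int_0^\infty\lambda^{d/2-1}e^{-tf(\lambda)}\,d\lambda$ together with $f^{-1}(1/t)\ge f^{-1}(1)>0$.
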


\begin{remark}
    Bochner's subordination is an abstract technique that works in all Banach spaces. The essential ingredient in the proof of Theorem~\ref{est-05} is the generalized triangle inequality which allows us to estimate the norm of an integral $\nnorm{\int\dots}$ by the integral of the norm $\int\nnorm{\dots}$. This shows that our results can be extended to \textbf{(tempered) homogeneous spaces} of the form ${\overset{*}{A}}\vphantom{A}^s_{p,q}(\rn)$ as well as \textbf{hybrid spaces} $A^{s,\tau}_{p,q}(\rn) = L^r A_{p,q}^s$, $\tau = p^{-1} + rn^{-1}$. The admissible parameters should be $p,q\in [1,\infty)$, $s\in\real$ and $-np^{-1}\leq r  < \infty$. As standard reference of these spaces we refer to \cite[Section 4.1, Section 1.1.2]{T20} and the literature given there.
\end{remark}

Let us now discuss higher-order generalized heat equations. In a series of papers, Baaske \& Schmei{\ss}er \cite{bas-sch17,bas-sch18,bas-sch19} studied semigroups $(W_t^{(m)})_{t\geq 0}$, $m\in\nat$, which are defined via
\begin{gather*}
    \Fcal W_t^{(m)}u(\xi) := e^{-t|\xi|^{2m}}\Fcal u(\xi),\quad u\in S,\;\xi\in\rn,\; t>0.
\end{gather*}
It is clear that $(W_t^{(m)})_{t\geq 0}$ is a semigroup which is given by a convolution kernel, $W_t^{(m)}u = K_{t,m}*u$, but while $K_{t,m}(x) = (2\pi)^{-n/2}\Fcal_{\xi\mapsto x}^{-1} e^{-|\xi|^{2m}}$ is from $S$, it may have arbitrary sign; in particular, $W_t^{(m)}$ is a uniformly bounded semigroup on $L_p$, $1\leq p<\infty$, but it is not positivity preserving. This means, in particular, that there is no Markov process which has $W_t^{(m)}$ as a transition semigroup. Nevertheless, Bochner's subordination formula \eqref{sub-e06} is still applicable; if we use $f(\lambda)=\lambda^\alpha$ for some $\alpha\in(0,1)$, we get a (in general, not positivity preserving) subordinate semigroup $(W_{t}^{(m),(\alpha)})_{t\geq 0}$. The calculation used in the proof of Lemma~\ref{sub-07} shows that
\begin{gather*}
    \Fcal W_t^{(m),(\alpha)} u(\xi) = e^{-t|\xi|^{2m\alpha}}\Fcal u(\xi) = \Fcal W_t^{(\alpha m)}u(\xi)
    \quad\text{for all $0<\alpha<1$, $m\in\nat$.}
\end{gather*}

A key result of Baaske \& Schmei{\ss}er \cite[Theorem~3.5]{bas-sch17} is the following caloric smoothing estimate for the operators $W_t^{(m)}$: {\itshape Let $1\leq p,q\leq \infty$ \textup{(}$p<\infty$ for the $F$-scale\textup{)}, $s\in\real$, $d\geq 0$ and $m\in\nat$. There is a constant $c>0$ such that
\begin{equation}\label{ext-e10}
    \|W_t^{(m)}u \mid A_{p,q}^{s+d}\|
    \leq c t^{-d/(2m)}\|u\mid A_{p,q}^s\|
    \quad\text{for all $t\in (0,1]$.}
\end{equation}}
If we use \eqref{ext-e10} instead of \eqref{est-e10} and write $\beta:=\alpha m$, we get immediately the following corollary to Theorem~\ref{est-05}.

\begin{corollary}\label{ext-11}
    Denote by $(W^{(\beta)}_t)_{t\geq 0}$, $W_t^{(\beta)} = e^{-t(-\Delta)^{\beta}}$ the generalized `fractional' heat semigroup of order $\beta>0$ generated by the higher-order fractional Laplace operator $-(-\Delta)^\beta$. Let $s\in\real$ and $1\leq p,q< \infty$.

    With the constant $c>0$ from \eqref{ext-e10} one has for every $d\geq 0$, $t>0$ and $u\in A_{p,q}^s$
    \begin{equation}\label{ext-e12}
        \nnorm{W_t^{(\beta)} u\mid A_{p,q}^{s+d}}
        \leq c \left(t^{-d/(2\beta)} \frac{\Gamma\left(1+d/(2\beta)\right)}{\Gamma\left(1+ d/2\right)} + 1\right) \nnorm{u \mid A_{p,q}^s}.
    \end{equation}
    In particular, the fractional counterpart of \eqref{est-e10} holds for some constant $c' = c'_{p,q,s,\alpha}$
    \begin{equation}\label{ext-e14}
        \nnorm{W_t^{(\beta)} u \mid A_{p,q}^{s+d}}
        \leq c't^{-d/(2\beta)} \nnorm{u \mid A_{p,q}^s},\quad 0< t\leq 1.
    \end{equation}
\end{corollary}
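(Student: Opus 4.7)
The corollary is a direct transcription of Theorem~\ref{est-05} to the higher-order setting, guided by the Fourier-side identity $\Fcal W_t^{(m),(\alpha)} u(\xi) = e^{-t|\xi|^{2\alpha m}}\Fcal u(\xi)$ recorded in the paragraph preceding the statement. The plan is: given $\beta>0$, pick $m\in\nat$ with $m>\beta$, so that $\alpha := \beta/m\in(0,1)$ is admissible for Bochner subordination, and use that identity to write $W_t^{(\beta)}=W_t^{(m),(\alpha)}$. The subordination formula \eqref{sub-e06} then gives the pointwise representation
\begin{gather*}
    W_t^{(\beta)}u(x) = \int_0^\infty W_r^{(m)}u(x)\,\mu_t^{(\alpha)}(dr),\quad u\in S.
\end{gather*}
The range $\beta\in(0,1]$ is already covered (Theorem~\ref{est-05} for $\beta<1$ and \eqref{ext-e10} with $m=1$ for $\beta=1$), so I may assume $\beta>1$.

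Next I would run the proof of Theorem~\ref{est-05} verbatim with $W_r$ replaced by $W_r^{(m)}$ and with the Triebel estimate \eqref{est-e10} replaced by the Baaske--Schmei{\ss}er estimate \eqref{ext-e10}. After applying the vector-valued triangle inequality for $\nnorm{\cdot\mid A_{p,q}^{s+d}}$ to the probability average and splitting the $r$-integration into $(0,1]$ and $(1,\infty)$, the first piece is bounded using \eqref{ext-e10} as $\nnorm{W_r^{(m)} u\mid A_{p,q}^{s+d}}\leq c r^{-d/(2m)}\nnorm{u\mid A_{p,q}^s}$, contributing at most $c\nnorm{u\mid A_{p,q}^s}\,\Ee\bigl[(S_t^{(\alpha)})^{-d/(2m)}\bigr]$. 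The second piece is handled by factoring $W_r^{(m)}=W_1^{(m)}W_{r-1}^{(m)}$, extracting the $d$ derivatives from \eqref{ext-e10} applied to $W_1^{(m)}$, and absorbing the remaining $W_{r-1}^{(m)}$ via the uniform boundedness of $(W_\rho^{(m)})_{\rho\geq 0}$ on $A_{p,q}^s$; this contributes at most $c\,\Pp(S_t^{(\alpha)}>1)\leq c$. Finally I would invoke Lemma~\ref{app-moment} to evaluate the moment as $t^{-d/(2m\alpha)}\Gamma(1+d/(2m\alpha))/\Gamma(1+d/(2m))=t^{-d/(2\beta)}\Gamma(1+d/(2\beta))/\Gamma(1+d/(2m))$ using $m\alpha=\beta$. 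Summing the two pieces yields \eqref{ext-e12}, and \eqref{ext-e14} follows by absorbing the additive ``$+1$'' into the constant on $0<t\leq 1$.

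The one technical obstacle is the uniform $A_{p,q}^s$-boundedness of $W_r^{(m)}$ for $r\geq 1$ required in the second piece. In Theorem~\ref{est-05} this was immediate from Theorem~\ref{contr}, which hinges on positivity preservation, but for $m\geq 2$ the convolution kernel $K_{r,m}(x)=(2\pi)^{-n/2}\Fcal^{-1}_{\xi\to x}e^{-r|\xi|^{2m}}$ changes sign and $W_r^{(m)}$ is no longer Markovian, so Theorem~\ref{contr} does not apply. I would resolve this by scaling: $K_{r,m}(x) = r^{-n/(2m)}K_{1,m}(r^{-1/(2m)}x)$ shows that $\nnorm{K_{r,m}\mid L_1}$ is independent of $r$, giving uniform $L_p$-boundedness for all $p\in[1,\infty]$, which then transfers to the $A_{p,q}^s$-scale via the standard Fourier-multiplier characterisations of these spaces; in fact, this uniform boundedness is already built into the semigroup framework developed in the Baaske--Schmei{\ss}er papers cited before the corollary. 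With that granted, the proof is a line-by-line copy of Theorem~\ref{est-05}.
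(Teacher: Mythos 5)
Your proposal follows exactly the paper's route: the paper's entire proof of Corollary~\ref{ext-11} is the remark preceding it---rerun the proof of Theorem~\ref{est-05} with $W_r^{(m)}$ in place of $W_r$ and \eqref{ext-e10} in place of \eqref{est-e10}, writing $\beta=\alpha m$---and that is precisely what you do. You also correctly isolate the one step that does not transfer verbatim, namely that Theorem~\ref{contr} is unavailable because $W_t^{(m)}$ is not positivity preserving for $m\geq 2$, and your fix (by scaling, $\|K_{t,m}\mid L_1\|$ is independent of $t$, so $W_t^{(m)}$ is uniformly bounded on $L_p$, and this passes to the $A_{p,q}^s$-scale since the kernel commutes with $\phi_k(D)$; for the $F$-scale use the vector-valued Minkowski/convolution inequality in $L_p(\ell_q)$ rather than the scalar $L_p$ bound) is exactly the ingredient the paper leaves implicit when it calls the corollary ``immediate''. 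One bookkeeping caveat: your moment computation gives $t^{-d/(2\beta)}\Gamma\left(1+d/(2\beta)\right)/\Gamma\left(1+d/(2m)\right)$ and the tail term picks up the uniform bound of $W^{(m)}$, so the argument actually yields \eqref{ext-e12} with $\Gamma\left(1+d/(2m)\right)$ in the denominator and a constant depending on $m$, not literally the printed $\Gamma\left(1+d/2\right)$ with the bare constant $c$; this discrepancy is inherited from the paper's statement (the two expressions coincide only for $m=1$) and is immaterial for \eqref{ext-e14}.
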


The cases $p=\infty, 1\leq q<\infty$ (for the $F$-scale) and $\max(p,q)=\infty$ (for the $B$-scale) are special and require the `extension by lifting' $\overline W^{(\beta)}_t$ explained at the end of Section~\ref{ext}. The analogues of \eqref{ext-e12} and \eqref{ext-e14} should be clear. If $\xi\mapsto |\xi|^{2\beta}$ is smooth, i.e.\ if $\beta\in\nat$, there is no need for an extension. At the moment, there is no subordination version for the spaces $F_{p,\infty}^s$, since in these cases \eqref{ext-e10} is yet unknown.

\section{An application of the caloric smoothing estimate}\label{apl}

The  result \eqref{ext-e10} was used in \cite{bas-sch17} to prove the existence and uniqueness of a mild solution to the non-linear equation
\begin{equation}\label{ext-e16}
\begin{split}
    \partial_t u(x,t) + (-\Delta_x )^m  u(x,t)
    &= \mathrm{div}[u^2] (x,t), \quad x\in \rn, \, t\in (0,T]\\
    u(x,0)
    &= u_0(x), \quad x\in \rn,
\end{split}
\end{equation}
where $\mathrm{div}[u^2]=\sum_{i=1}^n \frac{\partial}{\partial x_i} u^2$ is the divergence, $\Delta_x$ the Laplacian and $m\in\nat$. A \textbf{mild solution} is an element $u\in  S'(\real^{n+1})$, which is a fixed point for the operator
\begin{gather*}
    Q^{(m)} u(x,t) = W_t^{(m)} u_0(x)+ \int_0^t W_{t-\tau}^{(m)}\left(\mathrm{div}[u^2]\right)(x,\tau)\, d\tau,
    \quad x\in \rn,\, t\in (0,T)
\intertext{in the space}
    L_a \left( (0,T),b,A_{p,q}^s\right)
    := \left\{ u: (0,T)\to A_{p,q}^s, \quad \int_0^T t^{ab} \nnorm{u(\cdot,t)\mid A_{p,q}^s}^a\,  dt  < \infty\right\}
\end{gather*}
with some $a,b>0$  (and the usual modification of the norm if $a=\infty$).  A solution is called \textbf{strong}, if it is mild and if for any initial value $u_0\in A_{p,q}^{s_0}$ it belongs to $C\left( [0,T), A_{p,q}^{\alpha_0}\right)$ for some $\alpha_0$.  For suitable parameters $a,b,p,q,s$, a mild solution will be a strong solution, see \cite[Theorem 3.8.(ii)]{bas-sch17}.

The caloric estimate \eqref{ext-e10} was used in the proof of the existence of the mild solution, in order to show the contractivity of $Q^{(m)}$ and to apply a fixed point argument. Corollary~\ref{ext-11} enables us to follow the same procedure for the fractional equation
\begin{equation}\label{ext-e18}
\begin{split}
    \partial_t u(x,t) + (-\Delta_x )^\beta  u(x,t)
    &= \mathrm{div}[u^2] (x,t), \quad x\in \rn, \, t\in (0,T]\\
    u(x,0) &= u_0(x), \quad x\in \rn,
\end{split}
\end{equation}
where $\beta = m\alpha$ where $m=1,2,3,\dots$ and $\alpha\in (0,1)$; the solution is understood as an element of the space $A^{-\infty}_{p,q} = \bigcup_{s\in \real} A_{p,q}^s$. To do so, we extend the notion of a mild solution in the following way: $u(x,t)$ is a mild solution if $u(\cdot,t)\in A^{-\infty}_{p,q}$, $u(x,\cdot ) \in C^\infty(0,T)$, and $u$ is a fixed point of  $Q^{(\beta)}$. Note that $Q^{(\beta)}$ corresponds to the semigroup $W_t^{(m),(\alpha)} = W_t^{(m\alpha)}$, $t\geq 0$, obtained by subordination from $(W_t^{(m)})_{t\geq 0}$.

Corollary~\ref{ext-11} allows us to extend the result of Baaske \& Schmei{\ss}er from $m\in\nat$ to \textbf{all real} $\beta\geq 1$.
We state this result without proof; the proof of \cite[Theorem~3.8]{bas-sch17} transfers literally to the new situation.  The only change is at the very end of the proof in \cite[Eq.\ (3.79)]{bas-sch17}. Here we establish the continuity first for $u_0\in S(\rn)$ and argue then by density. Notice that $\|W_t^{(\beta)} u \mid A_{p,q}^{s-\beta+\delta}\|\leq c\|u \mid A_{p,q}^{s-\beta+\delta}\|$ by \eqref{ext-e10} with $d=0$ and $s\rightsquigarrow s-\beta+\delta$ for all $u\in S(\rn)$ with a uniform constant $c$. This is necessary since $e^{-|\xi|^\beta}$ is, in general, not a multiplier on $S(\rn)$.
The restriction $\beta\geq 1$ is needed in the proof of the contraction property \cite[proof of Theorem 3.8, Step 1]{bas-sch17}, while all other steps do work for $\beta>0$.

\begin{theorem}
    Let $n\geq 2$, $\beta\in [1,\infty)$, $1\leq p,q\leq \infty$ \textup{(}$p<\infty$ for the $F$-scale\textup{)} and $s\in\real$ is such that $A_{p,q}^s(\rn)$ is a multiplication algebra. Let
    \begin{gather*}
        a= \beta-\tfrac 1v-\beta\lambda,
        \quad\text{where}\quad
        \tfrac{2}{\beta}<v\leq \infty, \quad 0<\lambda < \epsilon\leq 1,
    \end{gather*}
    and $u_0\in A_{p,q}^{s-\beta+\beta  \epsilon}(\rn)$ be the initial data. There exists some $T>0$ such that \eqref{ext-e18} has a unique
    mild solution
    \begin{gather*}
        u\in L_{2\beta v} \big( (0,T), \tfrac{a}{2\beta}, A_{p,q}^s (\rn)\big) \cap C^\infty((0,T)\times\rn).
    \end{gather*}
    The mild solution is a strong solution if, in addition, $p,q<\infty$ and $\frac 12  \epsilon\leq \lambda< \epsilon\leq 1$ \textup{(}if $v<\infty$\textup{)}, resp., $\frac 12 \epsilon <\lambda< \epsilon \leq 1$ \textup{(}if $v=\infty$\textup{)}.
\end{theorem}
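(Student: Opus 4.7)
The plan is to prove the theorem by applying Banach's fixed point theorem to the mapping $Q^{(\beta)}$ on a closed ball in the space $L_{2\beta v}\bigl((0,T), a/(2\beta), A_{p,q}^s(\rn)\bigr)$, mimicking the scheme of Baaske--Schmei{\ss}er \cite[Theorem~3.8]{bas-sch17} with the integer exponent $m$ replaced by the real exponent $\beta$. First I would verify that $Q^{(\beta)}$ maps a suitable ball into itself for small $T$: the linear term $W_t^{(\beta)} u_0$ is controlled by Corollary~\ref{ext-11} applied with $d = \beta - \beta\epsilon$, producing a $t$-weight that matches the factor $t^{a/(2\beta)}$ of the target space, while the Duhamel term $\int_0^t W_{t-\tau}^{(\beta)}[\mathrm{div}(u^2)](\cdot,\tau)\,d\tau$ is handled by combining the multiplication-algebra property $\|u^2\mid A_{p,q}^s\|\lesssim\|u\mid A_{p,q}^s\|^2$, the one-derivative loss of $\mathrm{div}$, and Corollary~\ref{ext-11} applied to $W_{t-\tau}^{(\beta)}$ with $d = 1$.

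The contraction estimate then follows from the identity $u^2 - v^2 = (u+v)(u-v)$ together with the same machinery, yielding a bound of the form $\nnorm{Q^{(\beta)} u - Q^{(\beta)} v}\leq C T^\kappa (\nnorm{u} + \nnorm{v})\,\nnorm{u-v}$ with some $\kappa>0$, after which contraction on a small ball follows by choosing $T$ small. I expect the main obstacle here to be the book-keeping of exponents; this is precisely where the hypothesis $\beta\geq 1$ becomes indispensable, as in \cite[proof of Theorem~3.8, Step 1]{bas-sch17}. One needs the singularity $(t-\tau)^{-1/(2\beta)}$ arising from $d=1$ in the smoothing estimate to remain integrable against the time weight $\tau^{ab}$ after pairing with the $L_{2\beta v}$-structure, and this constraint just barely survives at $\beta=1$ but would fail for $\beta<1$; all other steps in the Baaske--Schmei{\ss}er scheme transfer verbatim for $\beta>0$.

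Two further points finish the argument. The $C^\infty((0,T)\times\rn)$ regularity is a standard bootstrap: each application of Corollary~\ref{ext-11} buys additional spatial smoothness for $t$ bounded away from zero, and time regularity is then recovered from the equation. For the strong-solution statement, the only genuinely new subtlety---corresponding to \cite[Eq.~(3.79)]{bas-sch17}---is that for $\beta\notin\nat$ the symbol $e^{-t|\xi|^{2\beta}}$ is not smooth at the origin, so $W_t^{(\beta)}$ is not \emph{a priori} a Fourier multiplier on $S'(\rn)$; to sidestep this I would first establish the required continuity $u\in C([0,T), A_{p,q}^{\alpha_0})$ for Schwartz initial data, where Corollary~\ref{ext-11} with $d=0$ supplies the uniform bound $\nnorm{W_t^{(\beta)} v \mid A_{p,q}^{s-\beta+\delta}}\leq c\nnorm{v\mid A_{p,q}^{s-\beta+\delta}}$, and then extend to general $u_0\in A_{p,q}^{s_0}$ by density, exploiting the assumption $p,q<\infty$ in the strong-solution regime.
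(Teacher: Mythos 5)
Your proposal follows essentially the same route as the paper, which itself states the theorem without a detailed proof and notes that the argument of \cite[Theorem~3.8]{bas-sch17} transfers literally once the caloric smoothing input is replaced by Corollary~\ref{ext-11}: a fixed-point argument for $Q^{(\beta)}$ in the weighted space, the restriction $\beta\geq 1$ entering only in the contraction step, and the continuity needed for the strong solution established first for Schwartz initial data and then by density because $e^{-t|\xi|^{2\beta}}$ is not a multiplier on $S(\rn)$. All three of these points appear in your outline, so it matches the paper's intended proof.
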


\section{Appendix -- some moment estimates}\label{app}

We need the following moment estimate for $\alpha$-stable subordinators. Although the result is well-known, see e.g.\ Sato~\cite[Eq.\ (25.5), p.\ 162]{sato13}, we include the proof for our readers' convenience. The short argument given below seems to be new.

\begin{lemma}\label{app-moment}
    Let $(S^{(\alpha)}_t)_{t\geq 0}$ be a stable subordinator with Bernstein function $f(\lambda)=\lambda^\alpha$,  $0<\alpha<1$, and transition semigroup $(\mu^{(\alpha)}_{t})_{t\geq 0}$. The moments $\Ee \left[\big(S_t^{(\alpha)}\big)^\kappa\right]$ exist for any $\kappa\in(-\infty,\alpha)$ and $t>0$. Moreover,
    \begin{gather*}
        \Ee \left[\big(S_t^{(\alpha)}\big)^\kappa\right]
        =
        \frac{\Gamma\left(1-\frac\kappa\alpha\right)}
        {\Gamma(1-\kappa)}\,t^{\frac\kappa\alpha},
        \quad t>0.
    \end{gather*}
\end{lemma}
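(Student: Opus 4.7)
The plan is to treat the negative moments $\kappa<0$ and the positive ones $0<\kappa<\alpha$ separately, in both cases converting a power of $S_t^{(\alpha)}$ into a Laplace-type integral so that the known identity $\Ee[e^{-\lambda S_t^{(\alpha)}}]=e^{-t\lambda^\alpha}$ can be plugged in, followed by the substitution $u=t\lambda^\alpha$ to isolate the $t$-dependence and produce a Gamma integral. The case $\kappa=0$ is trivial since both sides equal $1$.

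For $\kappa<0$, write $\kappa=-\sigma$ with $\sigma>0$ and use the elementary identity
\begin{equation*}
    s^{-\sigma}=\frac{1}{\Gamma(\sigma)}\int_0^\infty \lambda^{\sigma-1}e^{-\lambda s}\,d\lambda,\qquad s>0.
\end{equation*}
I would apply this with $s=S_t^{(\alpha)}>0$ (a.s.) and invoke Tonelli's theorem (integrand nonnegative) to exchange $\Ee$ with the $\lambda$-integral, obtaining
\begin{equation*}
    \Ee\bigl[\bigl(S_t^{(\alpha)}\bigr)^{-\sigma}\bigr]
    =\frac{1}{\Gamma(\sigma)}\int_0^\infty \lambda^{\sigma-1}e^{-t\lambda^\alpha}\,d\lambda.
\end{equation*}
The substitution $u=t\lambda^\alpha$ turns the right-hand side into $\tfrac{1}{\alpha\Gamma(\sigma)}\,t^{-\sigma/\alpha}\,\Gamma(\sigma/\alpha)$, and the reflection-style identities $\Gamma(\sigma)=\Gamma(1+\sigma)/\sigma$, $\Gamma(\sigma/\alpha)=(\alpha/\sigma)\Gamma(1+\sigma/\alpha)$ bring this to the advertised form. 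This already yields existence of every negative moment.

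For $0<\kappa<\alpha$, I would use the companion identity
\begin{equation*}
    s^\kappa=\frac{\kappa}{\Gamma(1-\kappa)}\int_0^\infty \lambda^{-\kappa-1}\bigl(1-e^{-\lambda s}\bigr)\,d\lambda,\qquad s>0,\;0<\kappa<1,
\end{equation*}
which is the Lévy--Khintchine representation of $s\mapsto s^\kappa$ and holds for every $\kappa\in(0,1)$; note $\alpha<1$, so this range covers every admissible $\kappa$. Again Tonelli applies (the integrand is nonnegative), so
\begin{equation*}
    \Ee\bigl[\bigl(S_t^{(\alpha)}\bigr)^{\kappa}\bigr]
    =\frac{\kappa}{\Gamma(1-\kappa)}\int_0^\infty \lambda^{-\kappa-1}\bigl(1-e^{-t\lambda^\alpha}\bigr)\,d\lambda.
\end{equation*}
After the same substitution $u=t\lambda^\alpha$ the integral becomes $\tfrac{1}{\alpha}\,t^{\kappa/\alpha}\int_0^\infty u^{-\kappa/\alpha-1}(1-e^{-u})\,du$, and by integration by parts the remaining integral equals $\frac{\alpha}{\kappa}\,\Gamma(1-\kappa/\alpha)$; this is where the hypothesis $\kappa<\alpha$ is essential, because otherwise $\kappa/\alpha\ge 1$ and the integral diverges at $0$. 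Collecting the constants gives the claim.

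The only genuinely delicate point is justifying the interchange of $\Ee$ and the $\lambda$-integral in the positive-$\kappa$ case and, simultaneously, verifying that the resulting $\lambda$-integral actually converges; both force the strict inequality $\kappa<\alpha$. Everything else is bookkeeping of Gamma-function identities. No extra structural properties of the subordinator beyond the Laplace transform $\Ee[e^{-\lambda S_t^{(\alpha)}}]=e^{-t\lambda^\alpha}$ from \eqref{sub-e05} are used, which is what makes the argument short.
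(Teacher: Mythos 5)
Your proposal is correct and follows essentially the same route as the paper: negative moments via $s^{-\sigma}=\frac{1}{\Gamma(\sigma)}\int_0^\infty \lambda^{\sigma-1}e^{-\lambda s}\,d\lambda$, Tonelli, and the substitution $u=t\lambda^\alpha$; positive moments $0<\kappa<\alpha$ via the L\'evy--Khintchine representation of $s\mapsto s^\kappa$, which is exactly the alternative the paper itself invokes (it also mentions analytic continuation as another option). Your write-up merely carries out the bookkeeping (integration by parts, the role of $\kappa<\alpha$) in more detail than the paper does.
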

\begin{proof}
    In this proof we write $S_t$ and $\mu_t$ instead of $S_t^{(\alpha)}$ and $\mu_t^{(\alpha)}$. Recall that the Laplace transform of $S_t$ is $\Ee\,e^{- x S_t} = \int_0^\infty e^{-xs}\,\mu_t(ds) = e^{-tx^\alpha}$, $x,t>0$. Substituting $\lambda=S_t$ in the well-known formula \cite[p.~vii]{SSV}
    \begin{gather*}
        \lambda^{-r} = \frac 1{\Gamma(r)}\int_0^\infty e^{-\lambda x} x^{r-1}\,dx,\quad \lambda > 0,\; r>0,
    \end{gather*}
    and taking expectations yields, because of Tonelli's theorem,
    \begin{gather}\label{app-e20}
        \Ee S_t^{-r}
        = \frac 1{\Gamma(r)}\int_0^\infty \Ee\,e^{-x S_t} x^{r-1}\,dx
        = \frac 1{\Gamma(r)}\int_0^\infty e^{-tx^\alpha} x^{r}\,\frac{dx}{x}.
    \end{gather}
    Now we change variables according to $y= t x^\alpha$, and get
    \begin{gather*}
        \Ee S_t^{-r}
        = \frac 1{\Gamma(r)} \frac 1\alpha t^{-\frac r\alpha}\int_0^\infty e^{-y} y^{\frac r\alpha}\,\frac{d y}{y}
        = t^{-\frac r\alpha} \frac{1}{r\Gamma(r)} \cdot \frac r\alpha\,\Gamma\left(\frac r\alpha\right)
        = t^{-\frac r\alpha} \frac{\Gamma\left(1+\frac{r}{\alpha}\right)}{\Gamma(1+r)}.
    \end{gather*}
    Setting $\kappa = -r$ proves the assertion for $\kappa\in (-\infty,0)$. This formula extends (analytically) to $-r = \kappa < \alpha$. Alternatively, we use a similar calculation and the L\'evy--Khintchine formula from Example~\ref{sub-09}
    \begin{equation*}
        \lambda^{r} = \frac r{\Gamma(1-r)}\int_0^\infty \left(1-e^{-\lambda x}\right) x^{-r-1}\,d x,\quad \lambda > 0,\; r\in (0,1),
    \end{equation*}
    to get the assertion for $\kappa\in (0,\alpha)$.
\end{proof}

The upper bound of the following lemma appears in the proof of \cite[Theorem~2.1]{dss17}.

\begin{lemma}\label{app-asymp}
    Let $(S^f_t)_{t\geq 0}$ be a subordinator with Bernstein function $f$ and transition semigroup $(\mu^{f}_{t})_{t\geq 0}$.
    Assume that $\lim_{\lambda\to\infty} f(\lambda)=\infty$ and that the inverse of $f$ satisfies $\limsup_{t\to\infty} f^{-1}(2t)/f^{-1}(t)<\infty$, (one can show, cf.\ \cite[Lemma~2.2]{dss17}, that these two conditions are equivalent to $\liminf_{\lambda\to 0} f(2\lambda)/f(\lambda)>1$), respectively.

    Under these assumptions, the moments $\Ee \left[\big(S_t^{(f)}\big)^{-r}\right]$ exist for any $r>0$ and $t>0$. Moreover,
    \begin{gather*}
        \tfrac 1{3\Gamma(1+r)}\left[f^{-1}\left(\tfrac 1t\right)\right]^{r}
        \leq
        \Ee \left[\big(S_t^{f}\big)^{-r}\right]
        \leq
         \tfrac C{\Gamma(1+r)} \left[f^{-1}\left(\tfrac 1t\right)\right]^{r},
        \quad 0<t\leq 1.
    \end{gather*}
\end{lemma}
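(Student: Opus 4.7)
The plan is to mimic the proof of Lemma~\ref{app-moment} by first reducing to a concrete integral. Substituting $\lambda=S_t^f$ in the identity $\lambda^{-r}=\Gamma(r)^{-1}\int_0^\infty e^{-\lambda x}x^{r-1}\,dx$, taking expectations, applying Tonelli, and using $\Ee\,e^{-xS_t^f}=e^{-tf(x)}$ will yield
\begin{gather*}
    \Ee\left[(S_t^f)^{-r}\right]
    = \frac{1}{\Gamma(r)}\int_0^\infty e^{-tf(x)}x^{r-1}\,dx.
\end{gather*}
Existence of the moments comes for free once finiteness of this integral is established. Near $0$ there is no problem because $r>0$; near $\infty$, the equivalence mentioned in the footnote together with $f(\lambda)\to\infty$ gives $f^{-1}(s)\leq Cs^{\gamma}$ for some $\gamma>0$ and all $s\geq 1$, so $f(x)\gtrsim x^{1/\gamma}$ for large $x$ and the integrand decays super-polynomially.

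Writing $a:=f^{-1}(1/t)$, so that $tf(a)=1$, the lower bound is nearly immediate. On $[0,a]$ one has $tf(x)\leq 1$, hence $e^{-tf(x)}\geq e^{-1}$, and I would then compute
\begin{gather*}
    \Ee\left[(S_t^f)^{-r}\right]
    \geq \frac{e^{-1}}{\Gamma(r)}\cdot\frac{a^r}{r}
    = \frac{[f^{-1}(1/t)]^r}{e\,\Gamma(1+r)}
    \geq \frac{[f^{-1}(1/t)]^r}{3\,\Gamma(1+r)},
\end{gather*}
using $e<3$.

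For the upper bound I would split the integral at $a$. The piece $\int_0^a$ is bounded trivially by $a^r/r$. The tail $\int_a^\infty$ is the real work: invoking the footnote, the hypothesis is equivalent to the existence of a constant $c\geq 2$ with $f^{-1}(2s)\leq c\,f^{-1}(s)$ for every $s\geq 1$, which applies because $1/t\geq 1$ in the range $t\leq 1$. Set $x_k:=f^{-1}(2^k/t)$, so that $x_0=a$ and $x_{k+1}\leq c\,x_k\leq c^{k+1}a$. On $[x_k,x_{k+1}]$ one has $tf(x)\geq 2^k$, hence
\begin{gather*}
    \int_{x_k}^{x_{k+1}} e^{-tf(x)}x^{r-1}\,dx
    \leq \frac{e^{-2^k}x_{k+1}^r}{r}
    \leq \frac{e^{-2^k}c^{(k+1)r}\,a^r}{r}.
\end{gather*}
Summing in $k$ produces the series $\sum_{k\geq 0}e^{-2^k}c^{kr}$, which converges because $e^{-2^k}$ decays double-exponentially while $c^{kr}$ only grows exponentially. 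This will yield a finite constant $C_r$ depending on $r$ and the doubling constant of $f^{-1}$, and combining with the first piece delivers the stated bound $C[f^{-1}(1/t)]^r/\Gamma(1+r)$.

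The step I expect to be the main obstacle is making the doubling condition quantitatively usable on the whole half-line of interest: strictly speaking, $\limsup_{s\to\infty}f^{-1}(2s)/f^{-1}(s)<\infty$ only furnishes a uniform constant beyond some threshold $T_0$, so one must absorb the finitely many sub-threshold ratios into the constant $c$ using continuity and strict positivity of $f^{-1}$ on a compact subinterval of $(0,\infty)$ to obtain a single $c\geq 2$ valid for all $s\geq 1$. Once that is secured, the remaining estimates are routine.
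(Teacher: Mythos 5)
Your proposal is correct and follows essentially the same route as the paper: the same reduction of $\Ee\left[(S_t^f)^{-r}\right]$ to $\frac1{\Gamma(r)}\int_0^\infty e^{-tf(x)}x^{r-1}\,dx$, the same lower bound on $\{tf(x)\leq 1\}$ giving the factor $e^{-1}\geq 1/3$, and the same dyadic splitting with the doubling property of $f^{-1}$ yielding the convergent series $\sum_n e^{-2^n}c^{(n+1)r}$. The only (cosmetic) difference is that you partition the $x$-axis at $x_k=f^{-1}(2^k/t)$ directly, whereas the paper first changes variables $y=f(x)$ and splits the resulting Stieltjes integral at the points $2^n/t$; your closing remark about absorbing the sub-threshold ratios into the doubling constant is exactly the point the paper glosses over with ``for, say, $y\geq 1$''.
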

\begin{proof}
    We write $S_t$ and $\mu_t$ instead of $S_t^f$ and $\mu_t^f$.
    Using the argument from the proof of Lemma~\ref{app-moment}, we get the following analogue of \eqref{app-e20}
    \begin{gather*}
        \Ee \left[(S_t)^{-r}\right]
        =  \frac 1{\Gamma(r)} \int_0^\infty e^{-tf(x)}\,x^r\,\frac{dx}{x}.
    \end{gather*}
    Changing variables according to $y=f(x)$---observe that $f$ is strictly monotone, $f(0)=0$ and $f(\infty)=\infty$---and using the fact that $f^{-1}(2y)\leq c f^{-1}(y)$ for, say, $y\geq 1$, we get for all $t\in (0,1]$
    \begin{gather*}
        \Ee \left[(S_t)^{-r}\right]
        =  \frac 1{r\Gamma(r)} \left(\int_0^{1/t} + \sum_{n=0}^\infty \int_{2^n/t}^{2^{n+1}/t} \right) e^{-ty}\,d_y[f^{-1}(y)]^r.
    \end{gather*}
    Since $r\Gamma(r)=\Gamma(r+1)$, this implies
    \begin{align*}
        e^{-1} [f^{-1}(1/t)]^r
        \leq \Gamma(r+1) \Ee \left[(S_t)^{-r}\right]
        &\leq  [f^{-1}(1/t)]^r + \sum_{n=0}^\infty e^{-2^n} [f^{-1}(2^{n+1}/t)]^r\\
        &\leq  \left(1+\sum_{n=0}^\infty e^{-2^n} c^{(n+1)r}\right) [f^{-1}(1/t)]^r.
    \qedhere
    \end{align*}
\end{proof}

\bigskip

\noindent
\textbf{Acknowledgement:}
    We thank Hans Triebel from Jena for his encouragement, helpful comments and the possibility to use the preprint of his new monograph~\cite{T20}.

\bigskip

\noindent
\textbf{Note added in proof:}
Meanwhile, the question just before Section 6, on the validity of (5.4) for the spaces $F^s_{p,\infty}$ has been affirmatively answered, seeF. K\"{u}hn and R.L. Schilling: \emph{Convolution inequalities for Besov and Triebel-Lizorkin spaces, and applications to convolution semigroups.} Studia Math. \textbf{262} (2022) 93--119.

\end{document}